\pdfoutput=1
\documentclass[11pt]{article}

\usepackage[T1]{fontenc}%
\usepackage{graphicx}%
\usepackage{multirow}%
\usepackage{amsmath,amssymb,amsfonts}%
\usepackage{amsthm}%
\usepackage{mathrsfs}%
\usepackage[mathscr]{eucal}%
\usepackage[title]{appendix}%
\usepackage{xcolor}%
\usepackage{textcomp}%
\usepackage{manyfoot}%
\usepackage{booktabs}%
\usepackage{authblk}%
\usepackage{microtype}%

\newtheoremstyle{thmstyleone}%
  {\topsep}
  {\topsep}
  {\itshape}
  {}
  {\bfseries}
  {.}
  {.5em}
  {}

\newtheoremstyle{thmstyletwo}%
  {\topsep}
  {\topsep}
  {}
  {}
  {\bfseries}
  {.}
  {.5em}
  {}

\newtheoremstyle{thmstylethree}%
  {\topsep}
  {\topsep}
  {}
  {}
  {\itshape}
  {.}
  {.5em}
  {}

\theoremstyle{thmstyleone}%
\newtheorem{theorem}{Theorem}%

\newtheorem{proposition}[theorem]{Proposition}%

\theoremstyle{thmstyletwo}%
\newtheorem{remark}{Remark}%

\theoremstyle{thmstylethree}%
\newtheorem{definition}{Definition}%
\newtheorem{corollary}{Corollary}%
\newtheorem{lemma}{Lemma}%

\title{Limit-Cycle Replication via Chebyshev Pullbacks and a Quadratic Ceiling for Separable Schemes}

\author[1]{Olimjon Eshkobilov}
\author[1]{Shirali Kadyrov}
\author[1]{Khudoyor Mamayusupov}

\affil[1]{Department of General Education, New Uzbekistan University, Movarounnahr 1, Tashkent 100007, Uzbekistan}

\date{}

\begin{document}

\maketitle

\begin{center}
\small
\texttt{o.eshqobilov@newuu.uz}\\
\texttt{sh.kadyrov@newuu.uz}\\
\texttt{k.mamayusupov@newuu.uz}\\
These authors contributed equally to this work.
\end{center}

\begin{abstract}
Let \(H(n)\) denote the Hilbert number, i.e.\ the maximal number of limit cycles of planar polynomial vector fields of degree \(\le n\).
A classical lower-bound mechanism for \(H(n)\) is \emph{replication}: one pulls back a vector field by a polynomial map and lifts each existing limit cycle to several disjoint copies while controlling the resulting degree.
In this paper we give a fully self-contained replication theorem based on the separable Chebyshev covering
\[
\Phi(u,v)=(T_m(u),T_m(v)).
\]
Using the \(m\) monotone full branches of \(T_m\) on \((-1,1)\), we prove that every degree-\(\le n\) polynomial vector field with \(k\) limit cycles gives rise to a degree-\(\le nm+m-1\) polynomial vector field with at least \(m^2k\) limit cycles.
Consequently,
\[
H(nm+m-1)\ge m^2H(n)\qquad (m\ge 2).
\]
We then extend the construction to general separable pullbacks \((u,v)\mapsto (p(u),p(v))\), show that Chebyshev attains the maximal possible branch count among degree-\(m\) separable pullbacks, and prove a quadratic ceiling for replication-only schemes: if one iterates separable pullbacks and no additional limit cycles are created beyond those forced by lifting, then the number of resulting limit cycles is at most quadratic in the final degree.
This shows that superquadratic lower bounds, such as the known \(n^2\log n\)-type bounds, necessarily require mechanisms beyond pure separable replication.
Finally, combining our replication theorem with the strongest currently published seed bounds, we obtain new explicit lower estimates in several degrees, including
\begin{gather*}
H(14)\ge 252,\qquad H(29)\ge 1080,\\
H(31)\ge 1380,\qquad H(39)\ge 2012.
\end{gather*}
\end{abstract}

\noindent\textbf{Keywords:} planar polynomial vector field, limit cycle, Hilbert's 16th problem, polynomial pullback, Chebyshev polynomial, replication

\section{Introduction}\label{sec:introduction}

Planar polynomial differential systems
\begin{equation}\label{eq:polyvf}
\dot x = P(x,y),\qquad \dot y = Q(x,y),\qquad P,Q\in\mathbb{R}[x,y],
\end{equation}
form a central class in the qualitative theory of dynamical systems and bifurcation theory; see, for instance,
\cite{Perko2001,DumortierLlibreArtes2006}.
A persistent driving problem is to quantify how the global phase portrait can grow in complexity as the algebraic
complexity of 
\eqref{eq:polyvf} 
increases. The most celebrated incarnation is the second part of Hilbert's 16th problem,
which asks for uniform bounds on the number and relative disposition of limit cycles of polynomial vector fields of a
given degree (see, e.g., 
\cite{Ilyashenko2002,Roussarie1998,Gasull2013} 
for background and surveys).

A \emph{planar polynomial vector field} is the vector field $X=(P,Q)$ in 
\eqref{eq:polyvf}
, and its (total) \emph{degree} is
\[
\deg(X):=\max\{\deg P,\deg Q\}.
\]
A \emph{periodic orbit} is a non-equilibrium closed trajectory of the flow, and a \emph{limit cycle} is a periodic orbit
that is isolated among periodic orbits (equivalently, it corresponds to an isolated fixed point of a local Poincar\'e
return map on a transverse section). For each $n\ge 1$, the associated \emph{Hilbert number} is
\begin{equation*}
\begin{aligned}
H(n) &:= \sup\bigl\{\pi(X): X \text{ is a planar polynomial vector field with }\\
     &\deg(X)\le n\bigr\},
\end{aligned}
\end{equation*}
where $\pi(X)$ denotes the number of limit cycles of $X$ (possibly $+\infty$).
Determining whether $H(n)$ is finite for each $n$ remains widely open; in particular, the finiteness of $H(2)$ is not known
(see 
\cite{GasullSantanaPAMS2025}
and the discussion in 
\cite{Ilyashenko2002}).
A major line of research surrounding Hilbert's 16th problem concerns finiteness theorems and their quantitative refinements;
classical references include 
\cite{Ilyashenko1991,Roussarie1998}.
Smale presents the following contemporary formulation of the second part of Hilbert's 16th problem: given a polynomial differential system
\eqref{eq:polyvf}
in the plane, does there exist an upper bound \(K\) on the number of limit cycles that depends only on 
the degree \(d=\deg(X)\) of the polynomials, specifically of the form \(K \leq d^{q}\) for some universal constant \(q\) 
\cite{Smale1998}?

On the one hand, there is substantial progress on structural aspects conditional on finiteness.
For example, assuming $H(n)<\infty$, Gasull--Santana prove that $H(n)$ is realized by a structurally stable vector field
with hyperbolic limit cycles and establish a monotonicity step $H(n+1)\ge H(n)+1$; see 
\cite{GasullSantanaPAMS2025}.
On the other hand, lower-bound constructions show that $H(n)$ grows at least quadratically along infinite sequences of
degrees, and several constructions achieve asymptotics of order $n^2\log n$ via suitable amplification/replication steps
combined with cycle-creation mechanisms; see 
\cite{ChristopherLloyd1995,LiChanChung2002,HanLi2012,AlvarezCollMaesschalckProhens2020}
and the recent overview 
\cite{GasullSantanaCPAA2026}
(cf.\ also 
\cite{BuzziNovaes2024}
for discussion and context).
Related recent work includes explicit polynomial families exhibiting
many limit cycles in constrained geometries (e.g.\ an invariant square with at least five limit cycles in a quartic family)
and quantitative lower bounds in generalized Liénard-type settings; see, for instance, 
\cite{GrauSzantoQTD2024,AbreuMartinsQTD2024}.

A recurring methodological theme in these lower bounds is \emph{replication} (or \emph{amplification}): one pulls back a
vector field by a polynomial map and uses the covering structure to lift each existing limit cycle to many disjoint copies
at a controlled increase of degree. This mechanism appears explicitly in the ``quadruple transformation'' of
Christopher--Lloyd \cite{ChristopherLloyd1995} and subsequent refinements (e.g.\ \cite{LiChanChung2002,HanLi2012}),
and it often serves as a scaffold on which additional local bifurcations are arranged to create new cycles.

\subsection*{Main results}

Our first contribution is a fully self-contained replication step based on the separable Chebyshev covering
\[
(u,v)\longmapsto \bigl(T_m(u),T_m(v)\bigr).
\]
The Chebyshev polynomial $T_m$ has $m$ monotone full branches on $(-1,1)$ (Lemma~\ref{lem:chebbranches}); hence the
product map is a diffeomorphism on each of the $m^2$ branch rectangles and lifts each limit cycle to $m^2$ disjoint copies.
This yields the following amplification inequality.

\begin{theorem}[Chebyshev replication inequality]\label{thm:replication}
Let \(n\ge 1\) and \(m\ge 2\). Then
\[
H(nm+m-1)\ \ge\ m^2\,H(n).
\]
Equivalently: given any degree-\(\le n\) planar polynomial vector field having \(k\) limit cycles, one can construct a
degree-\(\le(nm+m-1)\) planar polynomial vector field having at least \(m^2k\) limit cycles.
\end{theorem}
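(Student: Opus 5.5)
The plan is to pull the vector field back through the separable Chebyshev map and then clear the denominator of the inverse Jacobian.

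\textbf{Normalization.} Let \(X=(P,Q)\) have degree \(\le n\) and \(k\) limit cycles \(\gamma_1,\dots,\gamma_k\); if \(H(n)=\infty\) the same argument applied to arbitrarily large \(k\) gives the claim. Each \(\gamma_i\) is compact, so all of them lie in some bounded set. After an affine change \(x\mapsto ax+b\), \(y\mapsto cy+d\), which preserves degree and the number of limit cycles, I may assume every \(\gamma_i\) lies in the open square \(S=(-1,1)^2\).

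\textbf{The pulled-back field.} Put \(\Phi(u,v)=(T_m(u),T_m(v))\) and
\[
Y(u,v)=\bigl(T_m'(v)\,P(T_m(u),T_m(v)),\;T_m'(u)\,Q(T_m(u),T_m(v))\bigr).
\]
Since \(\deg T_m=m\) and \(\deg T_m'=m-1\), each component has degree \(\le nm+m-1\). Now set \(J(u,v)=T_m'(u)T_m'(v)\). A direct computation gives \(D\Phi\cdot Y=J\cdot(X\circ\Phi)\), so wherever \(J\neq 0\) the field \(Y\) is \(J\) times the pullback \(\Phi^*X\).

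\textbf{Branches.} By Lemma~\ref{lem:chebbranches}, \((-1,1)\) splits into \(m\) open intervals \(I_1,\dots,I_m\), separated by the critical points \(\cos(j\pi/m)\). On each \(I_j\), \(T_m\) is a strictly monotone diffeomorphism onto \((-1,1)\) with \(T_m'\neq0\). Hence on each of the \(m^2\) rectangles \(R_{jl}=I_j\times I_l\):
\begin{itemize}
\item \(J\) has constant sign;
\item \(\Phi|_{R_{jl}}\) is a diffeomorphism onto \(S\);
\item \(Y\) there is a nonvanishing smooth multiple of the pullback of \(X|_S\) by this diffeomorphism.
\end{itemize}
So \(\Phi|_{R_{jl}}^{-1}\) maps orbits of \(X\) in \(S\) onto orbits of \(Y\) in \(R_{jl}\), possibly reversing time when \(J<0\). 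It preserves closed orbits, and it preserves isolation among periodic orbits because it is a homeomorphism of open sets. Consequently \(\Phi|_{R_{jl}}^{-1}(\gamma_i)\) is a closed orbit of \(Y\) contained in \(R_{jl}\).

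\textbf{Main obstacle: isolation globally in the plane.} One must check that each lifted cycle is a limit cycle of \(Y\) in \(\mathbb{R}^2\), not just within \(R_{jl}\). Every lifted cycle is a compact set inside the open rectangle \(R_{jl}\). So a small annular neighborhood of it stays inside \(R_{jl}\). Any periodic orbit of \(Y\) meeting that neighborhood near the cycle lies in \(R_{jl}\), because periodic orbits close to a compact orbit stay close to it, for instance in the sense of a Poincaré section. Such an orbit therefore maps to a periodic orbit of \(X\) near \(\gamma_i\), which contradicts the isolation of \(\gamma_i\).

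\textbf{Counting.} The \(m^2k\) lifted cycles are pairwise distinct. Lifts in different rectangles are disjoint because the rectangles are disjoint. Within one rectangle they are disjoint because \(\Phi|_{R_{jl}}\) is injective and the \(\gamma_i\) are distinct. Thus \(\pi(Y)\ge m^2k\) while \(\deg Y\le nm+m-1\). Taking the supremum over \(X\) gives \(H(nm+m-1)\ge m^2H(n)\).
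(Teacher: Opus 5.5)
Your proposal is correct and follows essentially the same route as the paper: the same field $Y=(T_m'(v)\,P\circ\Phi,\;T_m'(u)\,Q\circ\Phi)$, the identity $D\Phi\cdot Y=T_m'(u)T_m'(v)\,X\circ\Phi$ on the $m^2$ branch rectangles, and one lifted cycle per rectangle. Your extra remarks on isolation in the whole plane (via a transverse section, using that a planar periodic orbit meets a small transversal only once) and on the case $H(n)=\infty$ make explicit what the paper leaves implicit through its Poincar\'e-map definition of a limit cycle and its "taking suprema" step.
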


\subsection*{Position relative to existing lower bounds}

Theorem~\ref{thm:replication} should be interpreted as an optimal result within the
class of \emph{separable replication schemes}, rather than as a replacement for the
stronger cycle-creation mechanisms developed in the Hilbert-number literature.
In particular, the lower-bound constructions of Christopher--Lloyd, Li--Chan--Chung,
and Han--Li combine replication with additional bifurcation arguments and thereby
produce superquadratic growth, of order \(N^{2}\log N\), for suitable infinite
families of degrees; see
\cite{ChristopherLloyd1995,LiChanChung2002,HanLi2012}.
By contrast, Theorem~\ref{thm:quadraticceiling} below shows that \emph{pure}
separable replication, under the assumption that no new cycles are created beyond
those forced by lifting, cannot by itself produce more than \(O(N^{2})\) limit cycles.
From this perspective, Theorem~\ref{thm:replication} identifies the optimal
multiplicative gain \(m^{2}\) available from a degree-\(m\) separable pullback and
pinpoints where an additional cycle-creation mechanism is needed in order to obtain
superquadratic growth.

A search of the literature up to the present shows that the explicit global
degree-by-degree lower bounds relevant here are still governed by Han--Li
\cite{HanLi2012} and Prohens--Torregrosa \cite{ProhensTorregrosa2019}; more recent
papers such as \cite{GasullSantanaPAMS2025,GasullSantanaCPAA2026} contribute
structural monotonicity and new recurrent viewpoints, but do not replace the
numerical records used in the comparison below.
Accordingly, for each target degree \(N\) we compare the best currently published
bound \(L_{\mathrm{pub}}(N)\) located in the literature with the strongest \emph{direct}
consequence of Theorem~\ref{thm:replication},
\[
L_{\mathrm{Ch}}(N)
:=
\max\bigl\{ m^{2}L_{\mathrm{pub}}(n)\;:\; N=(n+1)m-1,\ m\ge 2\bigr\}.
\]
Equivalently, if \(N+1=(n+1)m\), then Theorem~\ref{thm:replication} gives
\[
H(N)\ge m^{2}H(n)\ge m^{2}L_{\mathrm{pub}}(n).
\]

\begin{table}[htbp]
\centering
\small
\setlength{\tabcolsep}{5pt}
\begin{tabular}{ccccc}
\toprule
\(N\) & \(L_{\mathrm{pub}}(N)\) & \(L_{\mathrm{Ch}}(N)\) & seed \((n,m)\) & \(\Delta:=L_{\mathrm{Ch}}(N)-L_{\mathrm{pub}}(N)\)\\
\midrule
11 & \(\mathbf{153}\) \cite[Thm.~1.2(i)]{HanLi2012}                  & \(148\)             & \((5,2)\)  & \(-5\)   \\
13 & \(\mathbf{212}\) \cite[Cor.~2(a)]{ProhensTorregrosa2019}       & \(\mathbf{212}\)    & \((6,2)\)  & \(0\)    \\
14 & \(194\) \cite[Thm.~1.2(i)]{HanLi2012}                           & \(\mathbf{252}\)    & \((4,3)\)  & \(58\)   \\
15 & \(\mathbf{345}\) \cite[Thm.~1.2(i)]{HanLi2012}                  & \(296\)             & \((7,2)\)  & \(-49\)  \\
17 & \(\mathbf{384}\) \cite[Cor.~2(a)]{ProhensTorregrosa2019}       & \(\mathbf{384}\)    & \((8,2)\)  & \(0\)    \\
19 & \(\mathbf{503}\) \cite[Thm.~1.2(i)]{HanLi2012}                  & \(480\)             & \((9,2)\)  & \(-23\)  \\
20 & \(\mathbf{509}\) \cite[Thm.~1.2(i)]{HanLi2012}                  & \(477\)             & \((6,3)\)  & \(-32\)  \\
21 & \(\mathbf{568}\) \cite[Cor.~2(a)]{ProhensTorregrosa2019}       & \(\mathbf{568}\)    & \((10,2)\) & \(0\)    \\
23 & \(\mathbf{833}\) \cite[Thm.~1.2(i)]{HanLi2012}                  & \(666\)             & \((7,3)\)  & \(-167\) \\
24 & \(\mathbf{843}\) \cite[Thm.~1.2(i)]{HanLi2012}                  & \(700\)             & \((4,5)\)  & \(-143\) \\
25 & \(\mathbf{870}\) \cite[Thm.~1.2(i)]{HanLi2012}                  & \(628\)             & \((12,2)\) & \(-242\) \\
26 & \(\mathbf{880}\) \cite[Thm.~1.2(i)]{HanLi2012}                  & \(864\)             & \((8,3)\)  & \(-16\)  \\
27 & \(\mathbf{1023}\) \cite[Thm.~1.2(i)]{HanLi2012}                 & \(848\)             & \((13,2)\) & \(-175\) \\
29 & \(1060\) \cite[Thm.~1.2(i)]{HanLi2012}                          & \(\mathbf{1080}\)   & \((9,3)\)  & \(20\)   \\
31 & \(1184\) \cite[Cor.~2(a)]{ProhensTorregrosa2019}               & \(\mathbf{1380}\)   & \((15,2)\) & \(196\)  \\
35 & \(\mathbf{1536}\) \cite[Cor.~2(a)]{ProhensTorregrosa2019}      & \(\mathbf{1536}\)   & \((17,2)\) & \(0\)    \\
39 & \(1920\) \cite[Cor.~2(a)]{ProhensTorregrosa2019}               & \(\mathbf{2012}\)   & \((19,2)\) & \(92\)   \\
43 & \(\mathbf{2272}\) \cite[Cor.~2(a)]{ProhensTorregrosa2019}      & \(\mathbf{2272}\)   & \((21,2)\) & \(0\)    \\
\bottomrule
\end{tabular}
\caption{Comparison between the best published degree-specific lower bounds available in the
literature and the strongest direct consequence of
Theorem~\ref{thm:replication}. In each row, the better of the two bounds is set in bold;
when the two bounds coincide, both are shown in bold. The seed values used to compute
\(L_{\mathrm{Ch}}(N)\) are documented in Appendix~\ref{app:derivations}.}
\label{tab:pub-vs-cheb}
\end{table}

Table~\ref{tab:pub-vs-cheb} gives a more accurate picture than comparing only the three
degrees \(11\), \(14\), and \(29\).
First, it shows that once all currently available seed bounds are allowed, the strongest
direct degree-\(11\) consequence of Theorem~\ref{thm:replication} is not
\(9H(3)\ge 117\), but rather
\[
H(11)\ge 4H(5)\ge 148,
\]
which still remains slightly below Han--Li’s value \(153\).
Second, the theorem already \emph{matches} several current published records,
namely at \(N=13,17,21,35,43\).
This is not accidental: these rows come from the four-fold replication step
\(H(2n+1)\ge 4H(n)\), which is precisely the special case \(m=2\) of
Theorem~\ref{thm:replication}.
Third, and most importantly, the theorem \emph{improves} the currently published
values at
\[
H(14)\ge 252,\qquad
H(29)\ge 1080,\qquad
H(31)\ge 1380,\qquad
H(39)\ge 2012.
\]
Among these, the improvements at \(N=31\) and \(N=39\) seem especially worth recording:
they come from combining the \(m=2\) case of Theorem~\ref{thm:replication} with the
Han--Li seed bounds \(H(15)\ge 345\) and \(H(19)\ge 503\), respectively.

Accordingly, the role of Theorem~\ref{thm:replication} is twofold.
On the one hand, it gives a sharp and fully self-contained replication theorem for
separable polynomial pullbacks, with Chebyshev polynomials realizing the maximal branch
count.
On the other hand, when paired with the strongest seed systems currently available in the
literature, it yields several new competitive degree-specific lower bounds, even though
pure replication alone cannot explain the known \(O(N^{2}\log N)\)-type growth.

For a concrete illustration of Theorem~\ref{thm:replication},
Section~\ref{sec:workedexample} starts from an explicit cubic system with a single
hyperbolic limit cycle and applies the Chebyshev pullback with \(m=3\), producing \(9\)
pairwise disjoint hyperbolic limit cycles in the \(3\times 3\) branch rectangles at final
degree \(11\).

\begin{corollary}[Four-fold replication]
Taking \(m=2\) yields
\[
H(2n+1)\ge 4H(n).
\]
\end{corollary}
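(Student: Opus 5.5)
The corollary is the case \(m=2\) of Theorem~\ref{thm:replication}, so the plan is to substitute and check the arithmetic. With \(m=2\) the target degree is
\[
nm+m-1 = 2n+2-1 = 2n+1,
\]
and the multiplicative factor is \(m^2=4\). The inequality \(H(nm+m-1)\ge m^2H(n)\) therefore reads \(H(2n+1)\ge 4H(n)\) for every \(n\ge1\). The hypothesis \(m\ge2\) of the theorem is met, so nothing more is needed formally.

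It is also worth stating the constructive version, since that is what the table rows with seed \(m=2\) actually use. Take \(T_2(u)=2u^2-1\), which has the two monotone full branches \((-1,0)\) and \((0,1)\) onto \((-1,1)\) (Lemma~\ref{lem:chebbranches}). The separable map \(\Phi(u,v)=(T_2(u),T_2(v))\) then restricts to a diffeomorphism from each of the four open quadrant squares of \((-1,1)^2\) onto \((-1,1)^2\). Given a degree-\(\le n\) field with \(k\) limit cycles, first rescale by an affine change of coordinates so that all \(k\) cycles lie in \((-1,1)^2\); this does not change the degree. Then pull back:
\[
\dot u = \tfrac{1}{4v}\cdot 4v\,\tfrac{P(T_2(u),T_2(v))}{T_2'(u)},
\]
and similarly for \(\dot v\). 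Clearing the Jacobian factor gives the field
\[
\bigl(T_2'(v)\,P\circ\Phi,\;T_2'(u)\,Q\circ\Phi\bigr) = \bigl(4v\,P\circ\Phi,\;4u\,Q\circ\Phi\bigr).
\]
This has degree at most \(2n+1\). On each quadrant it is orbitally equivalent to the original field up to the positive or negative factor \(16uv\), so each limit cycle lifts to four disjoint limit cycles, one per quadrant. Isolatedness among periodic orbits is preserved, because the quadrant squares are disjoint and the conjugacy is a diffeomorphism there.

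The only point needing care, already handled in the proof of the theorem, is the time reparametrization. The factor \(uv\) vanishes on the axes and changes sign across them, but every lifted cycle lies strictly inside one open quadrant, so the sign is constant along it and orbits are preserved. For the corollary itself there is no real obstacle: it is the immediate specialization \(m=2\).
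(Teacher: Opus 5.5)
Your proposal is correct and follows the paper exactly: the corollary is the \(m=2\) case of Theorem~\ref{thm:replication}, where \(nm+m-1=2n+1\) and \(m^2=4\). Your constructive unpacking is the paper's construction for \(m=2\), namely \(T_2(u)=2u^2-1\) with branches \((-1,0)\) and \((0,1)\), the field \((4v\,P\circ\Phi,\,4u\,Q\circ\Phi)\), and \(\lambda=16uv\) of constant sign on each open quadrant; one small point is that each lift is a limit cycle because it sits compactly inside a quadrant on which \(\Phi\) is a diffeomorphism, whereas disjointness of the quadrants only shows that the four lifts are distinct.
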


As noted in \cite{GasullSantanaCPAA2026}, this four-fold replication step can already be
extracted from the construction in \cite[\S~3]{ChristopherLloyd1995}: after affine
localization and a separable squaring-type change of variables, each limit cycle lifts to
one copy in each quadrant, yielding the factor \(4\).

\begin{remark}\label{rem:sharpness-replication}
The factor \(m^{2}\) in Theorem~\ref{thm:replication} cannot be improved within the class
of \emph{separable} pullbacks \((u,v)\mapsto (p(u),p(v))\) with \(\deg(p)=m\).
Indeed, any real polynomial \(p\) of degree \(m\) has at most \(m\) full monotone
branches mapping onto \((-1,1)\) (Lemma~\ref{lem:branchbound}), so a separable pullback
can lift a given limit cycle to at most \(m^{2}\) disjoint copies
(cf.\ Proposition~\ref{prop:generalrep}).
Chebyshev attains this bound by Lemma~\ref{lem:chebbranches}.
Moreover, for genuinely degree-\(n\) fields the degree growth in the pullback construction
is exact,
\[
\deg(Y)=m\,\deg(X)+(m-1)
\]
(Lemma~\ref{lem:exactdegree}),
so the target degree \(nm+m-1\) cannot, in general, be lowered without changing the
pullback mechanism.

On the other hand, Theorem~\ref{thm:replication} is not expected to be sharp for the
Hilbert numbers themselves, since known constructions create additional limit cycles
beyond those forced by lifting, leading in particular to superquadratic lower bounds;
see, for example, \cite{HanLi2012,AlvarezCollMaesschalckProhens2020}.
Nevertheless, the theorem can still be competitive at the degree-by-degree level when
combined with strong seed bounds.
For example, taking \((n,m)=(4,3)\) gives \(N=4\cdot 3+3-1=14\), and hence
\[
H(14)\ge 9H(4).
\]
Using the bound \(H(4)\ge 28\) from \cite{ProhensTorregrosa2019}, this yields
\[
H(14)\ge 252,
\]
which improves the value \(H(14)\ge 194\) recorded in \cite{HanLi2012}.
Thus the sharpness issue is twofold: the separable replication factor \(m^{2}\) is optimal
within its class, while the resulting Hilbert-number lower bounds may or may not be
globally optimal depending on the quality of the available seed systems.
\end{remark}

Our second contribution isolates a limitation of \emph{pure separable replication}.
Starting from a degree-\(n_{0}\) system with \(k_{0}\) limit cycles, consider any
construction obtained by iterating only separable polynomial pullbacks
\((u,v)\mapsto (p(u),p(v))\), with \(\deg(p)\ge 2\), under the standing assumption that no
limit cycles are created except those forced by lifting previously existing cycles through
full branches.
Then the total number of limit cycles in the resulting degree-\(N\) system is bounded by a
quadratic function of \(N\).

\begin{theorem}[Quadratic ceiling for pure separable replication]
\label{thm:quadraticceiling}
Fix \(n_{0}\ge 1\).
Let \(X_{0}\) be a planar polynomial vector field of degree \(\le n_{0}\) having
\(k_{0}\) limit cycles.
Let \(X\) be any planar polynomial vector field of degree \(N\) obtained from \(X_{0}\)
by a finite sequence of separable polynomial pullbacks
\((u,v)\mapsto (p(u),p(v))\) with \(\deg(p)\ge 2\), assuming that at each step
\emph{no} limit cycles are created beyond the lifted copies forced by the full-branch
structure of the pullback.
Then
\[
\pi(X)\le k_{0}\left(\frac{N+1}{n_{0}+1}\right)^{2}.
\]
In particular, any such replication-only scheme yields at most \(O(N^{2})\) limit cycles
as \(N\to\infty\).
\end{theorem}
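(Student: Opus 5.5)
The plan is an induction on the number of pullback steps, tracking the pair (degree, number of limit cycles) and showing that the ratio \(\pi/(\deg+1)^2\) can never increase. Write the construction as a chain
\[
X_{0}\to X_{1}\to\cdots\to X_{r}=X,
\]
where \(X_{j}\) is obtained from \(X_{j-1}\) by a separable pullback with polynomial \(p_{j}\) of degree \(m_{j}\ge 2\). Let \(d_{j}:=\deg X_{j}\) and \(k_{j}:=\pi(X_{j})\). For each single step I would prove the two inequalities
\[
k_{j}\le m_{j}^{2}\,k_{j-1},\qquad d_{j}+1\le m_{j}(d_{j-1}+1).
\]

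The cycle count comes from the standing hypothesis. Every limit cycle of \(X_{j}\) is a lifted copy of a limit cycle of \(X_{j-1}\) through a full branch rectangle. By Lemma~\ref{lem:branchbound}, \(p_{j}\) has at most \(m_{j}\) full monotone branches, so there are at most \(m_{j}^{2}\) branch rectangles, as in Proposition~\ref{prop:generalrep}. Each rectangle contains at most one copy of each cycle, because the pullback is a diffeomorphism there and so conjugates the restricted flows bijectively. This gives \(k_{j}\le m_{j}^{2}k_{j-1}\).

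The degree bound comes from Lemma~\ref{lem:exactdegree}: the pulled-back field has degree exactly \(m_{j}d_{j-1}+m_{j}-1=m_{j}(d_{j-1}+1)-1\) when \(\deg X_{j-1}=d_{j-1}\). Since the final field has exact degree \(N\), I need the formula with equality at every step. If some intermediate field drops degree, or if the construction rescales or cancels common factors, I would replace equality by the inequality \(d_{j}+1\le m_{j}(d_{j-1}+1)\), which still holds.

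Chaining the two bounds over \(j=1,\dots,r\) and writing \(M:=\prod_{j}m_{j}\) gives \(\pi(X)\le k_{0}M^{2}\). For the degree, the chain gives only \(N+1\le M(d_{0}+1)\), where \(d_{0}=\deg X_{0}\le n_{0}\). That inequality points the wrong way for bounding \(M\) above in terms of \(N\), and I expect this to be the main obstacle. Here exactness of Lemma~\ref{lem:exactdegree} is essential: with genuine degrees it gives \(N+1=M(d_{0}+1)\). The statement, however, has \(n_{0}\) in the denominator, so I must use \(d_{0}=n_{0}\), i.e.\ read ``degree \(\le n_{0}\)'' with \(n_{0}=\deg X_{0}\). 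If \(\deg X_{0}<n_{0}\), the bound in terms of the true degree \(d_{0}\) is the stronger one, since \(d_{0}+1\le n_{0}+1\).

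Concretely, I would state at the start that \(n_{0}\) denotes the actual degree of \(X_{0}\) (otherwise take \(n_{0}:=\deg X_{0}\)). Then each intermediate pullback has exact degree by Lemma~\ref{lem:exactdegree}, applied inductively, since the lemma only needs the previous field to have its stated degree. This gives \(M=(N+1)/(n_{0}+1)\), and therefore
\[
\pi(X)\le k_{0}\left(\frac{N+1}{n_{0}+1}\right)^{2}.
\]
The \(O(N^{2})\) conclusion follows immediately because \(k_{0}\) and \(n_{0}\) are fixed.
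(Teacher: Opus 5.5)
Your argument is essentially the paper's proof: Lemma~\ref{lem:branchbound} bounds the per-step multiplier by $B(p_j)^2\le m_j^2$, giving $\pi(X)\le k_0\prod_j m_j^2$, and the exact degree growth of Lemma~\ref{lem:exactdegree} gives $\prod_j m_j=(N+1)/(\deg X_0+1)$, where the paper likewise tacitly identifies $\deg X_0$ with $n_0$. One correction to your aside: if $\deg X_0<n_0$, the bound with $\deg X_0+1$ in the denominator is the \emph{weaker} (larger) one, not the stronger one; with Chebyshev pullbacks and $k_0\ge 1$, the hypothesis-consistent count $k_0\bigl((N+1)/(\deg X_0+1)\bigr)^2$ then strictly exceeds the stated bound, so reading $n_0=\deg X_0$ is necessary rather than a convenient normalization.
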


Consequently, any superquadratic lower-bound strategy must incorporate mechanisms beyond
separable lifting alone, consistent with the structure of the sharpest existing
constructions; see, for example, \cite{HanLi2012,AlvarezCollMaesschalckProhens2020}.

\medskip\noindent\textbf{Organization.}
Section~\ref{sec:tools} records two elementary invariance facts: affine coordinate changes
and nonvanishing time reparametrizations.
Section~\ref{sec:chebyshev} recalls the monotone-branch structure of Chebyshev polynomials.
Section~\ref{sec:replication} proves Theorem~\ref{thm:replication}.
Section~\ref{sec:separable} abstracts the construction to general separable pullbacks and
establishes the quadratic ceiling in Theorem~\ref{thm:quadraticceiling}.
Finally, Section~\ref{sec:workedexample} provides an explicit illustrative example of
Theorem~\ref{thm:replication}.

\section{Two elementary tools}\label{sec:tools}

The replication argument uses two standard invariances: 
affine coordinate changes allow us to place all relevant limit
cycles inside a prescribed box, and multiplying a vector field by a nowhere-vanishing scalar does not change its orbit
set (hence preserves periodic orbits and limit cycles). 
We record both facts for later reference.

\begin{lemma}[Degree under affine changes]\label{lem:affine}
Let \(A:\mathbb{R}^2\to\mathbb{R}^2\) be an invertible affine map \(A(w)=Mw+b\) with \(M\in GL_2(\mathbb{R})\).
Let \(\dot z=X(z)\) be a planar polynomial vector field of degree \(\le n\).
Under the change of variables \(z=A(w)\), the system becomes
\[
\dot w = M^{-1}X(Mw+b),
\]
which is a polynomial vector field of degree \(\le n\).
\end{lemma}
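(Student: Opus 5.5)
We prove Lemma~\ref{lem:affine} by a direct chain-rule computation, followed by a degree count.

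\emph{Transformed equation.} Set \(z=A(w)=Mw+b\). If \(w(t)\) is a solution of the new system, then \(z(t)=Mw(t)+b\) is differentiable and
\[
\dot z = M\dot w .
\]
Conversely, every solution \(z(t)\) of \(\dot z=X(z)\) corresponds to the curve \(w(t)=M^{-1}(z(t)-b)\). Hence \(z(t)\) solves \(\dot z=X(z)\) if and only if
\[
M\dot w = X(Mw+b),
\]
that is, if and only if
\[
\dot w = M^{-1}X(Mw+b),
\]
using that \(M\) is invertible. This establishes the stated form of the system.

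\emph{Degree bound.} Write \(X=(P,Q)\) with \(\deg P,\deg Q\le n\). Each coordinate of \(Mw+b\) is an affine function of \((w_1,w_2)\), so it has degree at most \(1\). Substitute these into a monomial \(x^iy^j\) with \(i+j\le n\). The result is a product of \(i+j\) polynomials, each of degree at most \(1\), so it has degree at most \(i+j\le n\). Therefore
\[
\deg P(Mw+b)\le n,\qquad \deg Q(Mw+b)\le n .
\]
Multiplying by the constant matrix \(M^{-1}\) forms real linear combinations of these two polynomials, and this cannot raise the degree. It follows that each component of \(M^{-1}X(Mw+b)\) has degree at most \(n\).

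\emph{Equality of degree.} Equality in fact holds. The same argument applied to the inverse affine map \(A^{-1}(z)=M^{-1}z-M^{-1}b\) recovers \(X\) from the transformed field. The degree therefore cannot drop, but only the inequality is needed.

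There is no real obstacle. The only point needing care is that the factor \(M^{-1}\) is constant, which is exactly what makes the linear-combination step degree-preserving.
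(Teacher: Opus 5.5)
Your proposal is correct and follows essentially the same route as the paper: differentiate $z=Mw+b$ to get $M\dot w=X(Mw+b)$, invert $M$, and observe that substituting affine functions into $P,Q$ and then applying the constant matrix $M^{-1}$ cannot raise the total degree. Your monomial-by-monomial degree count and the remark that equality of degree follows by applying the inverse affine change are correct additions. The paper states only the inequality.
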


\begin{proof}
Write \(z=A(w)=Mw+b\). Differentiating gives \(\dot z = M\dot w\), hence
\[
M\dot w = X(Mw+b)\qquad\Rightarrow\qquad \dot w = M^{-1}X(Mw+b).
\]
Since \(X\) is polynomial, the composition \(X(Mw+b)\) is polynomial and does not increase total degree, and
multiplication by the constant matrix \(M^{-1}\) does not change degree. Therefore the transformed vector field is
polynomial of degree \(\le n\).
\end{proof}

In the replication construction we will obtain an orbit correspondence of the form \(D\Phi\cdot Y=\lambda\,X\circ\Phi\)
with \(\lambda\neq 0\) on each branch. The following standard lemma explains why multiplying a vector field by a
nowhere-vanishing function does not change trajectories and preserves Poincar\'e return maps up to inversion.

\begin{lemma}[Nonvanishing time change preserves trajectories]\label{lem:timechange}
Let \(X\) be a \(C^1\) vector field on an open set \(\Omega\subset\mathbb{R}^2\), and 
let \(\lambda:\Omega\to\mathbb{R}\) be locally Lipschitz with \(\lambda(z)\neq 0\) for all \(z\in\Omega\).
Define \(\widetilde X(z):=\lambda(z)\,X(z)\).
Then:
\begin{enumerate}
\item \(X\) and \(\widetilde X\) have the same unparametrized trajectories in \(\Omega\) (with reversed orientation on connected regions where \(\lambda<0\)).
\item A periodic orbit of \(X\) in \(\Omega\) is a periodic orbit of \(\widetilde X\), and conversely.
\item If \(\gamma\) is a limit cycle of \(X\) in \(\Omega\), then \(\gamma\) is a limit cycle of \(\widetilde X\) in \(\Omega\).
\item Let \(\gamma\) be a periodic orbit and let \(\Sigma\) be a sufficiently small transverse section through a point of \(\gamma\).
Write \(P_X\) and \(P_{\widetilde X}\) for the corresponding local Poincar\'e return maps defined using forward time for each flow.
Then on their common domain, either \(P_{\widetilde X}=P_X\) (if \(\lambda>0\) in a neighborhood of \(\gamma\)) or \(P_{\widetilde X}=P_X^{-1}\) (if \(\lambda<0\) in a neighborhood of \(\gamma\)).
In particular, \(P_X\) and \(P_{\widetilde X}\) have the same fixed point set near \(\gamma\), and a fixed point is isolated for \(P_X\) iff it is isolated for \(P_{\widetilde X}\).
\end{enumerate}
\end{lemma}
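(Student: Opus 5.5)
The plan is to prove the four items in order, each building on the previous one. The guiding fact is that \(\widetilde X\) is a pointwise rescaling of \(X\) by a nowhere-vanishing scalar, so the two fields point along the same lines everywhere in \(\Omega\).

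\emph{Item 1.} Since \(\Omega\) need not be connected, I would argue on each connected component, where \(\lambda\) has constant sign by continuity and the intermediate value theorem. Both \(X\) and \(\widetilde X=\lambda X\) are locally Lipschitz, so both flows exist and are unique. Take an integral curve \(\varphi(t)\) of \(X\) through \(z_0\) that is not an equilibrium; equilibria coincide for the two fields because \(\lambda\ne0\). Define a new time \(s\) by
\[
\frac{dt}{ds}=\lambda(\varphi(t)),\qquad t(0)=0 .
\]
The right-hand side is locally Lipschitz and nonvanishing, so \(t(s)\) is a strictly monotone \(C^1\) change of parameter: increasing where \(\lambda>0\) and decreasing where \(\lambda<0\). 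Then
\[
\psi(s):=\varphi(t(s))\quad\text{satisfies}\quad \psi'(s)=\lambda(\psi(s))\,X(\psi(s))=\widetilde X(\psi(s)),
\]
so \(\psi\) is the \(\widetilde X\)-orbit through \(z_0\). The symmetric argument, using \(1/\lambda\), shows that the parameter change reaches the whole maximal orbit in both directions. Hence the unparametrized orbits coincide, with reversed orientation exactly where \(\lambda<0\).

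\emph{Item 2.} A periodic orbit is a compact non-equilibrium trajectory, i.e.\ a closed curve that is an orbit. This notion is invariant under item 1, and equilibria of \(X\) and \(\widetilde X\) coincide. Concretely, the new period is \(\bigl|\int_0^T ds\bigr| = \int_0^T |\lambda(\varphi(t))|^{-1}\,dt\), which is finite and positive because \(\lambda\) is bounded away from zero on the compact orbit.

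\emph{Items 3 and 4.} I would prove item 4 first and derive item 3 from it. Take a section \(\Sigma\) transverse to \(X\) at a point of \(\gamma\). It is also transverse to \(\widetilde X\), since the two vectors are parallel and nonzero there. For \(\sigma\in\Sigma\) near \(\gamma\), \(P_X(\sigma)\) is the first forward return of the \(X\)-orbit of \(\sigma\) to \(\Sigma\).
\begin{itemize}
\item If \(\lambda>0\) on a neighborhood of \(\gamma\), the reparametrization preserves orientation on that orbit segment. The first forward hit of \(\Sigma\) is then the same point, so \(P_{\widetilde X}=P_X\).
\item If \(\lambda<0\), forward time for \(\widetilde X\) is backward time for \(X\). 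The first forward \(\widetilde X\)-return is therefore the first backward \(X\)-return, which gives \(P_{\widetilde X}=P_X^{-1}\) on the common domain.
\end{itemize}
In either case the fixed-point sets coincide near \(\gamma\), and isolation of a fixed point is preserved. Item 3 follows because a limit cycle is exactly a periodic orbit whose corresponding fixed point is isolated.

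The main obstacle is the ``sufficiently small'' bookkeeping in item 4. The section and the neighborhood of \(\gamma\) must be chosen so that \(\lambda\) has one sign on the whole tubular neighborhood the orbits traverse, and so that ``first return'' means the same thing for both flows. I would handle this by choosing a compact tubular neighborhood of \(\gamma\) inside the component of \(\Omega\) containing \(\gamma\), then shrinking \(\Sigma\) until, by continuity of the flow, the returning orbit segments stay in that neighborhood.
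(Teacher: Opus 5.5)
Your proposal is correct and follows essentially the same route as the paper: your time change \(dt/ds=\lambda(\varphi(t))\) is the inverse of the paper's \(\tau(t)=\int_{t_0}^{t}\lambda(z(s))^{-1}\,ds\), and, like the paper, you prove item 4 by comparing first forward returns according to the sign of \(\lambda\) near \(\gamma\), then deduce item 3 from the fact that isolated fixed points are preserved. Working on connected components of \(\Omega\), where \(\lambda\) has constant sign, is a slightly cleaner way to get the sign control that the paper obtains from continuity and compactness on a tubular neighborhood of \(\gamma\).
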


\begin{proof}
Since \(\lambda\) is locally Lipschitz and nowhere zero, both ODEs
\(\dot z=X(z)\) and \(\dot z=\widetilde X(z)\) have unique maximal solutions through each initial condition.

\smallskip\noindent
\emph{(1) Orbit sets coincide.}
Let \(z(t)\) be a maximal solution of \(\dot z=X(z)\) with \(z(t_0)=z_0\).
Define
\[
\tau(t):=\int_{t_0}^{t}\frac{1}{\lambda(z(s))}\,ds.
\]
Because \(\lambda(z(s))\neq 0\), the function \(\tau(t)\) is \(C^1\) and strictly monotone on any interval on which \(z\) is defined (increasing if \(\lambda>0\) along the orbit segment, decreasing if \(\lambda<0\)).
Hence it has a local \(C^1\) inverse \(t=t(\tau)\) on compact subintervals of its image.
Set \(w(\tau):=z(t(\tau))\). By the chain rule,
\[
\frac{dw}{d\tau}
=\frac{dz}{dt}\frac{dt}{d\tau}
=X(w)\,\lambda(w)
=\widetilde X(w).
\]
Thus the same point set \(\{z(t)\}\) is traversed by a solution of \(\dot w=\widetilde X(w)\) (with reversed orientation when \(\tau\) decreases).
The converse implication (from \(\widetilde X\)-solutions to \(X\)-solutions) is obtained by the symmetric reparametrization
\(t(\tau)=\int_{\tau_0}^{\tau}\frac{1}{\lambda(w(\sigma))}\,d\sigma\).
Therefore \(X\) and \(\widetilde X\) have the same unparametrized trajectories, proving (1).

\smallskip\noindent
\emph{(2) Periodic orbits are preserved.}
If \(\gamma\) is a periodic orbit of \(X\), then it is a compact trajectory set of \(X\), hence by (1) it is also a trajectory set of \(\widetilde X\).
Since \(\gamma\) is closed and non-equilibrium, it is a periodic orbit for \(\widetilde X\). The converse is identical.

\smallskip\noindent
\emph{(4) Return maps: equality or inverse.}
Let \(\gamma\) be a periodic orbit and \(\Sigma\) a small transverse section through a point \(p\in\gamma\).
By continuity and compactness of \(\gamma\), the function \(\lambda\) has constant sign on a sufficiently small tubular neighborhood of \(\gamma\); 
shrink \(\Sigma\) so that the forward return map is defined for both flows on a common neighborhood of \(p\) in \(\Sigma\).

If \(\lambda>0\) near \(\gamma\), then the time-change reparametrization above preserves forward orientation along trajectories. 
Hence, for any \(q\in\Sigma\) close to \(p\), the next intersection of the \(X\)-orbit of \(q\) with \(\Sigma\) is the same point as the next intersection of the \(\widetilde X\)-orbit of \(q\) with \(\Sigma\). Thus \(P_{\widetilde X}=P_X\).

If \(\lambda<0\) near \(\gamma\), then the same orbit is traversed in the opposite direction. 
In that case, the \emph{forward} return for \(\widetilde X\) corresponds to the \emph{backward} return for \(X\), so \(P_{\widetilde X}=P_X^{-1}\) on the common domain.

In either case, \(P_X\) and \(P_{\widetilde X}\) have the same fixed point set near \(p\), and isolation of a fixed point is preserved under inversion, proving the last assertion in (4).

\smallskip\noindent
\emph{(3) Limit cycles are preserved.}
A periodic orbit \(\gamma\) is a limit cycle iff the corresponding fixed point of the local Poincar\'e map on a transverse section is isolated. 
By (4), isolation is preserved between \(P_X\) and \(P_{\widetilde X}\). Hence \(\gamma\) is a limit cycle for \(X\) iff it is a limit cycle for \(\widetilde X\).
\end{proof}

\section{Chebyshev polynomials as multibranch coverings}\label{sec:chebyshev}

\begin{definition}[Chebyshev polynomial]
For \(m\ge 1\), the Chebyshev polynomial of the first kind \(T_m\) is defined by
\[
T_m(\cos\theta)=\cos(m\theta)\quad(\theta\in\mathbb{R}).
\]
Then \(T_m\) is a real polynomial of degree \(m\) and \(T_m([-1,1])=[-1,1]\).
\end{definition}

\begin{lemma}[Monotone branches of \(T_m\)]\label{lem:chebbranches}
Fix \(m\ge 2\) and define
\[
c_k := \cos\Bigl(\frac{k\pi}{m}\Bigr),\qquad k=0,1,\dots,m.
\]
Then \(1=c_0>c_1>\cdots>c_m=-1\). On each open interval
\[
I_k := (c_{k},\,c_{k-1})\qquad (k=1,\dots,m),
\]
the map \(T_m\) is \(C^\infty\), strictly monotone, and satisfies \(T_m(I_k)=(-1,1)\). 
Hence \(T_m:I_k\to(-1,1)\) is a diffeomorphism and \(T_m'(x)\neq 0\) for all \(x\in I_k\).
\end{lemma}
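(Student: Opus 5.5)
The plan is to use the trigonometric parametrization $x=\cos\theta$ throughout. This turns the question into elementary facts about $\cos$ on intervals of length $\pi/m$.

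\emph{Ordering of the nodes.} The angles $k\pi/m$, for $k=0,\dots,m$, increase strictly from $0$ to $\pi$. Since $\cos$ is strictly decreasing on $[0,\pi]$, we get $1=c_0>c_1>\cdots>c_m=-1$. Moreover, $\cos:[0,\pi]\to[-1,1]$ is a homeomorphism, and on $(0,\pi)$ it is a diffeomorphism because $\cos'=-\sin<0$ there. In particular it maps the angular interval $J_k:=((k-1)\pi/m,\,k\pi/m)$ diffeomorphically onto $I_k=(c_k,c_{k-1})$, with a decreasing orientation. Write $\theta=\arccos x$ for the inverse on $I_k$.

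\emph{Monotonicity and image.} For $x\in I_k$ we have $T_m(x)=\cos(m\arccos x)$, with $m\arccos x$ ranging over $m J_k=((k-1)\pi,\,k\pi)$. On each open interval $((k-1)\pi,k\pi)$ the function $\cos$ is strictly monotone and has derivative $-\sin\phi\neq 0$. It maps this interval onto $(-1,1)$, since its endpoint values are $\pm1$ with opposite signs and are not attained in the interior. Composing the strictly monotone diffeomorphisms $x\mapsto \arccos x$ (from $I_k$ to $J_k$), $\theta\mapsto m\theta$, and $\cos$ (on $((k-1)\pi,k\pi)$) shows that $T_m|_{I_k}$ is strictly monotone with $T_m(I_k)=(-1,1)$.

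\emph{Nonvanishing derivative.} Smoothness is automatic because $T_m$ is a polynomial. For the derivative, the chain rule gives
\[
T_m'(x)=\frac{m\sin(m\theta)}{\sin\theta},\qquad x=\cos\theta,
\]
where $\theta\in J_k\subset(0,\pi)$ and $m\theta\in((k-1)\pi,k\pi)$. Hence $\sin\theta\neq0$ and $\sin(m\theta)\neq0$, so $T_m'(x)\neq 0$. A strictly monotone $C^\infty$ bijection with nonvanishing derivative has a smooth inverse by the inverse function theorem, so $T_m:I_k\to(-1,1)$ is a diffeomorphism.

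There is no real obstacle here. The only point needing care is bookkeeping: the nodes $c_k$ must correspond exactly to the critical angles $k\pi/m$, so that each $I_k$ contains no interior point where $\sin(m\theta)=0$. This is where the choice $c_k=\cos(k\pi/m)$ is used.
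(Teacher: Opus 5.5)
Your proof is correct and follows the same route as the paper: the substitution $x=\cos\theta$ reduces everything to the strict monotonicity of $\cos$ on $((k-1)\pi,k\pi)$, where its range is $(-1,1)$. Your explicit formula $T_m'(\cos\theta)=m\sin(m\theta)/\sin\theta$ is a worthwhile addition. It justifies $T_m'\neq 0$ on $I_k$ directly, whereas the paper infers this from strict monotonicity alone, which by itself would not suffice, as $x\mapsto x^3$ shows.
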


\begin{figure}[htbp!]
  \centering
  \includegraphics[width=0.6\linewidth]{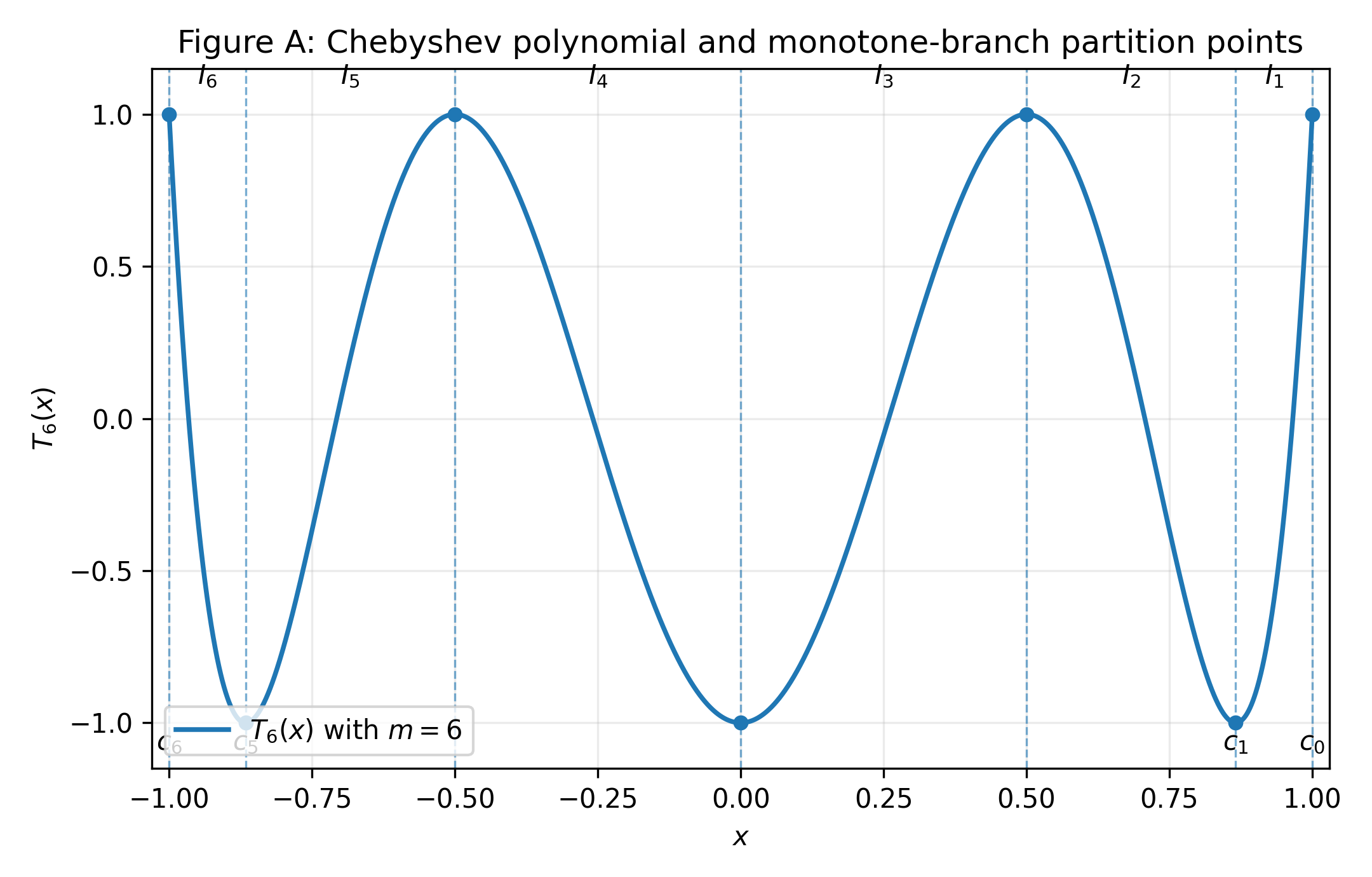}
  \caption{Chebyshev polynomial $T_m$ (here $m=6$). The points $c_k=\cos(k\pi/m)$ partition $[-1,1]$ into intervals
  $I_k=(c_k,c_{k-1})$ on which $T_m$ is strictly monotone and maps $I_k$ onto $(-1,1)$.}
  \label{fig:cheb-branches}
\end{figure}

\begin{proof}
On \((0,\pi)\), \(\theta\mapsto \cos\theta\) is strictly decreasing, so \(c_0>\cdots>c_m\).
Fix \(k\) and write \(x=\cos\theta\) with \(\theta\in(\frac{(k-1)\pi}{m},\frac{k\pi}{m})\), which corresponds exactly to \(x\in I_k\).
Then \(T_m(x)=\cos(m\theta)\) with \(m\theta\in((k-1)\pi,k\pi)\), where \(\cos\) is strictly monotone and has range \((-1,1)\) on that open interval. 
Therefore \(T_m\) is strictly monotone on \(I_k\) and maps it onto \((-1,1)\), giving a diffeomorphism and \(T_m'\neq 0\) on \(I_k\).
Figure~\ref{fig:cheb-branches} visualizes this decomposition: the points $c_k$ split $(-1,1)$ into the $m$ intervals
$I_k=(c_k,c_{k-1})$ on which $T_m$ is monotone and covers $(-1,1)$.

\end{proof}

\section{Replication via Chebyshev pullback}\label{sec:replication}

\begin{proof}[Proof of Theorem~\ref{thm:replication}.]
Let \(X(x,y)=(P(x,y),Q(x,y))\) be a planar polynomial vector field of degree \(\le n\) having limit cycles
\(\gamma_1,\dots,\gamma_k\).
Since \(\bigcup_{\ell=1}^k \gamma_\ell\) is compact, an invertible affine change of coordinates
(Lemma~\ref{lem:affine}) places all these cycles inside the square
\(S_\rho:=(-\rho,\rho)^2\) for some \(\rho\in(0,1)\).

Fix \(m\ge 2\) and let \(T_m\) be the Chebyshev polynomial of degree \(m\).
By Lemma~\ref{lem:chebbranches} there exist pairwise disjoint open intervals
\(I_1,\dots,I_m\subset(-1,1)\) such that each restriction \(T_m:I_i\to(-1,1)\) is a diffeomorphism and
\(T_m'(x)\neq 0\) on \(I_i\).
Define the separable map
\[
\Phi(u,v):=(T_m(u),T_m(v)).
\]
Then for every pair \((i,j)\), the restriction
\(\Phi:I_i\times I_j\to(-1,1)^2\) is a diffeomorphism, and \(T_m'(u)T_m'(v)\) has a constant, nonzero sign on
\(I_i\times I_j\).

\begin{figure}[t]
  \centering
  \includegraphics[width=0.95\linewidth]{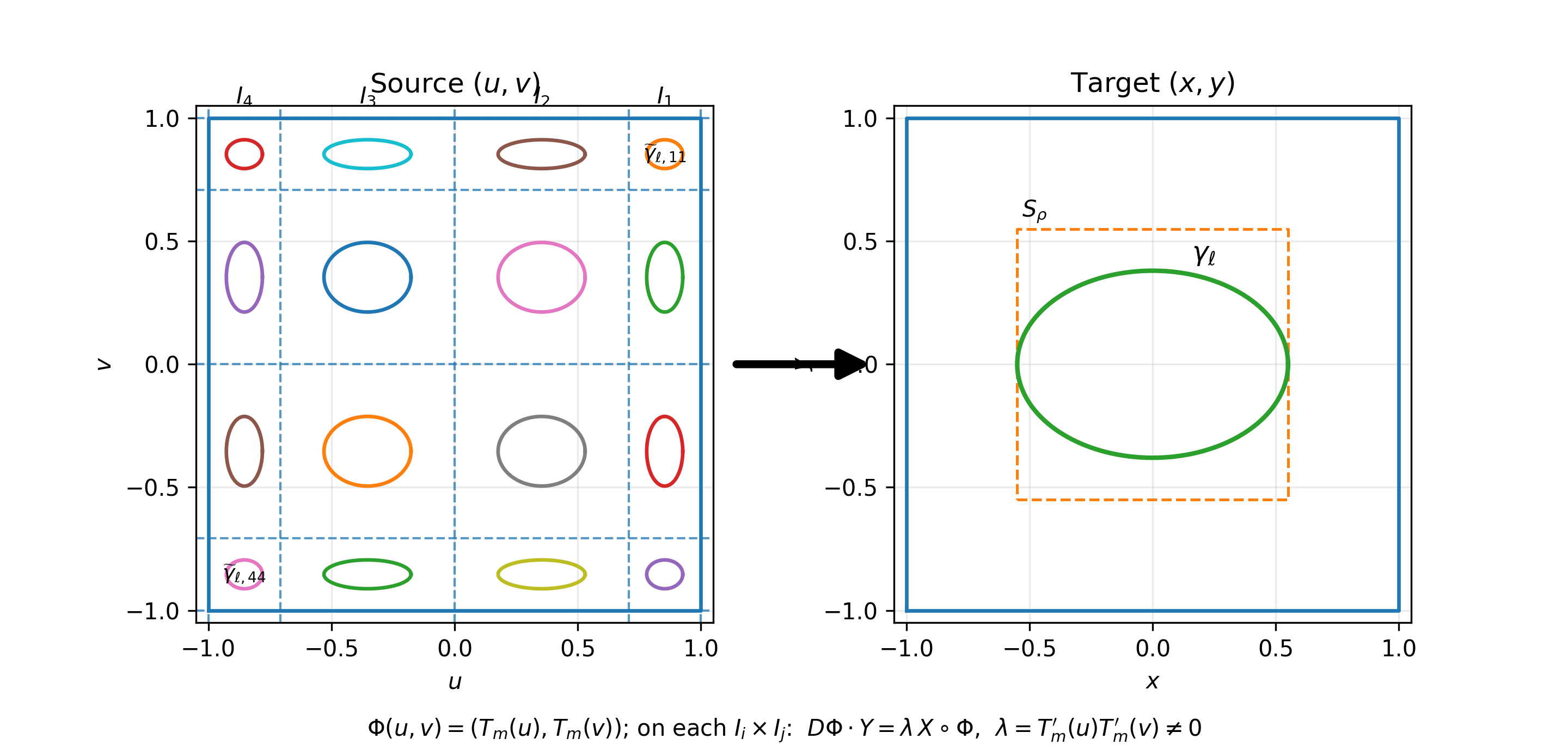}
  \caption{Replication schematic. A limit cycle $\gamma_\ell\subset S_\rho$ lifts under
  $\Phi(u,v)=(T_m(u),T_m(v))$ to disjoint cycles $\widetilde{\gamma}_{\ell,ij}$ inside the branch rectangles
  $I_i\times I_j$ (one shown per rectangle). On each rectangle,
  $D\Phi\cdot Y=\lambda\,X\circ\Phi$ with $\lambda=T_m'(u)T_m'(v)\neq 0$.}
  \label{fig:replication-schematic}
\end{figure}
Geometrically, the $m$ monotone full branches of $T_m$ partition $(-1,1)$ into disjoint intervals
$I_1,\dots,I_m$, so the domain $(u,v)$-plane is partitioned into $m^2$ disjoint branch rectangles
$I_i\times I_j$.  As illustrated in Figure~\ref{fig:replication-schematic}, on each such rectangle the map
$\Phi(u,v)=(T_m(u),T_m(v))$ is a diffeomorphism onto $(-1,1)^2$.  Consequently, any limit cycle
$\gamma_\ell\subset S_\rho$ has a unique lifted copy
$\widetilde{\gamma}_{\ell,ij}\subset I_i\times I_j$ in each branch rectangle, 
and these lifts are pairwise disjoint across different $(i,j)$.

\smallskip
\noindent\textbf{Pullback field and degree.}
Define a polynomial vector field \(Y\) on \(\mathbb{R}^2\) by
\[
\dot u = T_m'(v)\,P(T_m(u),T_m(v)),\qquad
\dot v = T_m'(u)\,Q(T_m(u),T_m(v)).
\]
Since \(\deg(T_m)=m\) and \(\deg(T_m')=m-1\), we have \(\deg(Y)\le nm+(m-1)=nm+m-1\).

\smallskip
\noindent\textbf{Orbit correspondence on each branch.}
On any rectangle \(I_i\times I_j\), the chain rule gives, for an orbit \((u(t),v(t))\subset I_i\times I_j\),
\begin{align*}
\frac{d}{dt}\Phi(u(t),v(t))
&=
\begin{pmatrix}T_m'(u) & 0\\ 0 & T_m'(v)\end{pmatrix}
\binom{\dot u}{\dot v} \\
&=
T_m'(u)T_m'(v)\,(P,Q)\bigl(\Phi(u(t),v(t))\bigr).
\end{align*}
Equivalently,
\[
D\Phi\cdot Y=\lambda\,X\circ\Phi
\quad\text{on } I_i\times I_j,\qquad
\lambda(u,v):=T_m'(u)T_m'(v)\neq 0.
\]
\[D\Phi\cdot Y=\lambda\,X\circ\Phi,\quad \lambda=T_m'(u)T_m'(v)\neq 0\]

\begin{figure}[htbp]
  \centering
  \includegraphics[width=0.5\linewidth]{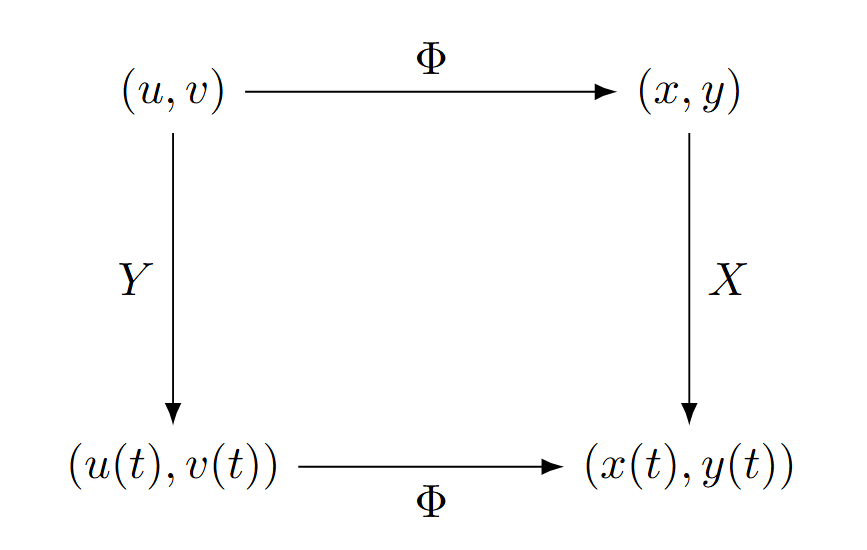}
  \caption{Commutative diagram for the conjugacy relation.}
  \label{fig:Lift}
\end{figure}

Thus \(\Phi\) sends \(Y\)-trajectories in \(I_i\times I_j\) onto \(X\)-trajectories in \((-1,1)^2\), with only a
nonvanishing time reparametrization (possibly reversing orientation if \(\lambda<0\)).
In particular, periodic orbits and their isolation properties transfer through \(\Phi\) on each branch
(cf.\ Lemma~\ref{lem:timechange}).

\smallskip
\noindent\textbf{Lifting limit cycles.}
Fix a limit cycle \(\gamma_\ell\subset S_\rho\).
For each \((i,j)\), define its lift
\[
\widetilde\gamma_{\ell,ij}:=\Phi^{-1}(\gamma_\ell)\cap(I_i\times I_j).
\]
Because \(\Phi|_{I_i\times I_j}\) is a diffeomorphism, \(\widetilde\gamma_{\ell,ij}\) is a simple closed curve and
\(\Phi(\widetilde\gamma_{\ell,ij})=\gamma_\ell\).
Moreover, since \(\gamma_\ell\Subset(-1,1)^2\) and \(\Phi|_{I_i\times I_j}\) is a homeomorphism, 
the lift \(\widetilde\gamma_{\ell,ij}\) is compactly contained in \(I_i\times I_j\); 
in particular it has positive distance from \(\partial(I_i\times I_j)\).
The orbit correspondence above implies that \(\widetilde\gamma_{\ell,ij}\) is \(Y\)-invariant and consists of
periodic trajectories, hence \(\widetilde\gamma_{\ell,ij}\) is a periodic orbit of \(Y\).
If \((i,j)\neq(i',j')\), then \(\widetilde\gamma_{\ell,ij}\) and \(\widetilde\gamma_{\ell,i'j'}\) are disjoint,
since the rectangles \(I_i\times I_j\) are disjoint.

Finally, 
\(\gamma_\ell\) is a \emph{limit} cycle of \(X\), so a local Poincar\'e return map near \(\gamma_\ell\) has an
isolated fixed point corresponding to \(\gamma_\ell\).
On each rectangle \(I_i\times I_j\), 
the dynamics of \(Y\) is conjugate to that of \(X\) via \(\Phi\) up to a nonvanishing time change; 
therefore the corresponding fixed point for
\(\widetilde\gamma_{\ell,ij}\) is isolated as well, and \(\widetilde\gamma_{\ell,ij}\) is a limit cycle of \(Y\).

\smallskip
\noindent\textbf{Counting.}
Each \(\gamma_\ell\) produces \(m^2\) distinct lifted limit cycles \(\widetilde\gamma_{\ell,ij}\), hence \(Y\) has at least
\(m^2k\) limit cycles.
Since \(\deg(Y)\le nm+m-1\), taking suprema over all degree-\(\le n\) fields \(X\) yields
\[
H(nm+m-1)\ \ge\ m^2\,H(n),
\]
as claimed.
\end{proof}

\section{Separable pullbacks and a quadratic bound}\label{sec:separable}

\subsection{Full-branch count for a univariate polynomial}

\begin{definition}[Full branches]\label{def:branches}
Let \(p\in\mathbb{R}[x]\) have degree \(m\ge 1\).
A \emph{full branch interval} for \(p\) is an open interval \(I\subset\mathbb{R}\) such that
\(p:I\to(-1,1)\) is a \(C^1\) diffeomorphism.
Let \(B(p)\) be the number of full branch intervals of \(p\).
\end{definition}

\begin{lemma}[Branch bound]\label{lem:branchbound}
If \(\deg(p)=m\), then \(B(p)\le m\).
\end{lemma}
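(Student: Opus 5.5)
The plan is to show that distinct full branch intervals are pairwise disjoint, and that each one forces $p$ to take the value $0$ there. Since $p$ has at most $m$ real zeros, this caps the count at $m$.

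\emph{Step 1: each full branch interval contains a zero of $p$.} If $I$ is a full branch interval, then $p:I\to(-1,1)$ is a bijection. Hence there is a unique $x_I\in I$ with $p(x_I)=0$.

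\emph{Step 2: distinct full branch intervals are disjoint.} Let $I=(a,b)$ be a full branch interval. Then $p$ is strictly monotone on $I$ with image $(-1,1)$, so $p(x)\to\pm1$ as $x\to a^+$ and as $x\to b^-$. If $a$ is finite, continuity gives $|p(a)|=1$, and likewise $|p(b)|=1$ if $b$ is finite.

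Suppose a second full branch interval $J=(c,d)$ with $J\neq I$ meets $I$. Without loss of generality some endpoint of one lies strictly inside the other; say $c\in(a,b)$. Then $|p(c)|=1$ by the boundary behaviour of $J$, while $p(c)\in(-1,1)$ because $c\in I$. This is a contradiction.

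Two cases remain: the intervals share both endpoints, in which case they are equal, or they are infinite in a way that makes one contain the other. If $I\subsetneq J$, then some endpoint of $I$ is finite and lies inside $J$. The same argument then gives $|p|=1$ at a point where $|p|<1$, again a contradiction. So distinct full branch intervals are disjoint, and I will just check the unbounded-endpoint cases carefully in the write-up.

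\emph{Step 3: counting.} The map $I\mapsto x_I$ is injective by disjointness. Its image lies in the zero set of $p$, which has at most $m$ elements because $p$ is a nonzero polynomial of degree $m$. Hence $B(p)\le m$.

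The main obstacle is Step 2, because Definition~\ref{def:branches} does not by itself forbid overlapping or nested intervals. The boundary-value argument handles this, but I must verify that the endpoint limits equal $\pm1$. This follows because a monotone bijection onto $(-1,1)$ has limits $\pm1$ at the ends of its domain.

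As an alternative route, one could count critical points. Between two disjoint full branches $p$ must turn around, which gives $B(p)\le(\#\text{critical points})+1\le m$. The zero-counting route is cleaner, so I will use it.
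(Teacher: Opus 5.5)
Your proof is correct, but it counts a different object than the paper does. The paper counts monotonicity intervals. Since $p'$ has at most $m-1$ real zeros, $\mathbb{R}$ splits into at most $m$ open intervals on which $p$ is monotone, and each such interval $J$ can carry at most one full branch, namely $p^{-1}((-1,1))\cap J$. This is essentially the alternative you sketch in your last paragraph. You instead count zeros of $p$: each full branch contains exactly one zero, and distinct full branches are disjoint, so $B(p)\le\#\{x\in\mathbb{R}:p(x)=0\}\le m$.

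Your version uses only that $p|_I$ is a continuous bijection onto $(-1,1)$ and never uses $p'\neq 0$. It therefore survives if \emph{diffeomorphism} is weakened to \emph{homeomorphism}, for example $p(x)=x^3$ on $(-1,1)$, which straddles a critical point. It also makes explicit the disjointness of distinct branches, which the paper leaves implicit. In the paper's argument one tacitly needs that a full branch contains no critical point (true because the inverse is $C^1$), so that it lies inside a single monotonicity interval. Replacing $0$ by any $c\in(-1,1)$ in your argument even gives $B(p)\le\min_{c\in(-1,1)}\#\bigl(p^{-1}(c)\cap\mathbb{R}\bigr)$.

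The paper's route, in exchange, ties the bound to the critical-point structure of $p$. That is what makes the extremality of $T_m$ transparent: all $m-1$ critical points of $T_m$ lie in $(-1,1)$ with critical values $\pm1$, so each of the $m$ monotonicity intervals contains a full branch.

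Finally, your worry about unbounded endpoints is moot. A nonconstant polynomial is unbounded near $\pm\infty$, so every full branch interval is automatically bounded.
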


\begin{proof}
The real line is partitioned into at most \(m\) open intervals on which \(p\) is monotone,
since \(p'\) has degree \(m-1\) and hence at most \(m-1\) real zeros.
On each monotonicity interval \(J\), the set \(p^{-1}((-1,1))\cap J\) is either empty or an open interval;
if it maps onto \((-1,1)\), it contributes at most one full branch. Hence \(B(p)\le m\).
\end{proof}

\begin{remark}
For \(p=T_m\), Lemma \ref{lem:chebbranches} gives \(B(T_m)=m\).
\end{remark}

\subsection{General separable replication}

\begin{proposition}[Separable replication]\label{prop:generalrep}
Let \(X(x,y)=(P(x,y),Q(x,y))\) be a planar polynomial vector field and
assume all its limit cycles lie in some compact \(K\Subset(-1,1)^2\).
Let \(p\in\mathbb{R}[x]\) have degree \(m\ge 2\) and full branch intervals \(I_1,\dots,I_{B(p)}\).
Define
\[
\Phi(u,v)=(p(u),p(v)),
\]
and define a polynomial vector field \(Y\) by
\[
\dot u = p'(v)\,P(p(u),p(v)),\qquad
\dot v = p'(u)\,Q(p(u),p(v)).
\]
Then:
\begin{enumerate}
\item \(\deg(Y)\le m\,\deg(X)+(m-1)\).
\item For each rectangle \(I_a\times I_b\), \(\Phi\) restricts to a diffeomorphism onto \((-1,1)^2\), and on \(I_a\times I_b\) one has
\[
D\Phi\cdot Y = (p'(u)p'(v))\, X\circ \Phi,
\]
with \(p'(u)p'(v)\neq 0\). Hence \(\Phi\) maps \(Y\)-orbits to \(X\)-orbits and
the local Poincar\'e return maps are conjugate.
\item Each limit cycle of \(X\) lifts to at least \(B(p)^2\) pairwise disjoint limit cycles of \(Y\).
\end{enumerate}
Consequently, for every \(n\ge 1\) and every polynomial \(p\) of degree \(m\ge 2\),
\[
H(nm+m-1)\ \ge\ B(p)^2\,H(n).
\]
\end{proposition}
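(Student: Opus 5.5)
The plan is to run the proof of Theorem~\ref{thm:replication} again with $T_m$ replaced by $p$ and its full branch intervals $I_1,\dots,I_{B(p)}$ replacing the Chebyshev intervals of Lemma~\ref{lem:chebbranches}.

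\emph{Item (1).} This is a degree count. Each monomial $x^ay^b$ with $a+b\le n=\deg(X)$ becomes $p(u)^ap(v)^b$, of degree $\le mn$. Multiplying by $p'(v)$ or $p'(u)$ adds $m-1$, so $\deg(Y)\le mn+m-1$.

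\emph{Item (2).} First, the full branch intervals are pairwise disjoint. If two of them met, $p$ would be strictly monotone on their union, because a union of overlapping intervals on each of which $p$ is injective and continuous stays monotone of one sign. It would then map the union injectively onto $(-1,1)$, which forces the two intervals to coincide. Alternatively one can simply choose the $I_a$ pairwise disjoint, as in the proof of Lemma~\ref{lem:branchbound}, where distinct branches lie in distinct monotonicity intervals.

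Since $p|_{I_a}$ is a $C^1$ diffeomorphism onto $(-1,1)$, the product $\Phi|_{I_a\times I_b}=p|_{I_a}\times p|_{I_b}$ is a diffeomorphism onto $(-1,1)^2$. The chain rule computation is the same as in the Chebyshev case:
\[
D\Phi\cdot Y=\mathrm{diag}(p'(u),p'(v))\bigl(p'(v)P\circ\Phi,\;p'(u)Q\circ\Phi\bigr)=p'(u)p'(v)\,X\circ\Phi .
\]
One point needs care. A $C^1$ diffeomorphism $p|_{I_a}$ has $p'\ne0$ on $I_a$, since its inverse is differentiable. Hence $\lambda:=p'(u)p'(v)$ is nonvanishing and of constant sign on the connected rectangle. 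Pushing $Y$ forward by the diffeomorphism $\Phi|_{I_a\times I_b}$ then gives the field $(\lambda\circ\Phi^{-1})X$ on $(-1,1)^2$. By Lemma~\ref{lem:timechange}, its orbits are the $X$-orbits and its return maps are $P_X$ or $P_X^{-1}$, so the return maps of $Y$ are conjugate to these via $\Phi$.

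\emph{Item (3).} For a limit cycle $\gamma\subset K$, set $\widetilde\gamma_{ab}:=(\Phi|_{I_a\times I_b})^{-1}(\gamma)$. This is a compact $Y$-periodic orbit inside the open rectangle. Its Poincaré map on a section pulled back by $\Phi$ is conjugate to $P_X$ or $P_X^{-1}$, so the fixed point remains isolated and $\widetilde\gamma_{ab}$ is a limit cycle of $Y$. Disjointness of the rectangles gives $B(p)^2$ pairwise disjoint lifts.

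The one subtlety is whether lifts of different cycles could coincide. They cannot, since $\Phi$ maps $\widetilde\gamma_{ab}$ onto $\gamma$, and distinct cycles have distinct images. So we obtain $B(p)^2k$ distinct limit cycles.

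\emph{Consequence.} Given $X$ of degree $\le n$ with $k$ limit cycles (finitely many, or any finite subfamily if there are infinitely many), an invertible affine change (Lemma~\ref{lem:affine}) puts them in a compact $K\Subset(-1,1)^2$ without raising the degree. Applying items (1)--(3) and taking the supremum gives $H(nm+m-1)\ge B(p)^2H(n)$, including the case $H(n)=\infty$.

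I expect the only real obstacle to be the bookkeeping in item (2): disjointness of the branch intervals and $p'\ne0$ on them, which the definition gives only implicitly through the $C^1$-diffeomorphism requirement. The rest is a transcription of the Chebyshev proof.
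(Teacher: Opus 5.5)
Your proposal is correct and follows the paper's proof essentially step for step. It counts degrees, uses the diagonal chain-rule identity $D\Phi\cdot Y=p'(u)p'(v)\,X\circ\Phi$ on each branch rectangle together with Lemma~\ref{lem:timechange}, and transfers isolation to the compactly contained lifts. The extra bookkeeping you add is all valid and fills in points the paper leaves implicit: pairwise disjointness of distinct full branch intervals, $p'\neq 0$ from the $C^1$-diffeomorphism requirement, distinctness of lifts of different cycles, and the affine normalization of a finite subfamily for the final inequality, including the case $H(n)=\infty$.
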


\begin{proof}
(1) Immediate from \(\deg(p)=m\), \(\deg(p')=m-1\), and degree multiplication under composition.

(2) On \(I_a\times I_b\), \(p'\neq 0\) and \(p\) is a diffeomorphism onto \((-1,1)\) in each coordinate, so \(\Phi\) is a diffeomorphism.
For an orbit \((u(t),v(t))\subset I_a\times I_b\), letting \((x,y)=\Phi(u,v)\) yields
\[
\dot x=p'(u)\dot u=p'(u)p'(v)\,P(x,y),\qquad
\dot y=p'(v)\dot v=p'(u)p'(v)\,Q(x,y),
\]
which is the displayed identity. 
Apply Lemma \ref{lem:timechange} to obtain orbit correspondence and conjugacy of return maps.

(3) For each limit cycle \(\gamma\subset K\), define its lifts \(\Phi^{-1}(\gamma)\cap(I_a\times I_b)\).
These are periodic orbits by (2).
Disjointness follows from disjoint rectangles.
Because \(K\Subset(-1,1)^2\) and \(\Phi|_{I_a\times I_b}\) is a diffeomorphism,
each lift lies a positive distance from the rectangle boundary;
the return maps are conjugate by (2), so isolation transfers.
Hence each lift is a limit cycle.
\end{proof}

\subsection{Exact degree growth for the pullback construction}

\begin{lemma}[Exact degree growth]\label{lem:exactdegree}
Let \(X=(P,Q)\) be a planar polynomial vector field of degree \(d:=\deg(X)\ge 1\), and
let \(p\) be a polynomial of degree \(m\ge 2\).
Form \(Y\) by the separable pullback construction of Proposition \ref{prop:generalrep}.
Then
\[
\deg(Y)= m\,d + (m-1),\qquad\text{equivalently}\qquad \deg(Y)+1 = m(\deg(X)+1).
\]
\end{lemma}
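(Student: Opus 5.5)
The plan is to prove the upper bound from Proposition~\ref{prop:generalrep}(1) and then show that no cancellation occurs in the top-degree homogeneous part of one component of \(Y\). The upper bound \(\deg(Y)\le md+(m-1)\) is already given, so only the reverse inequality remains. Since \(\deg(X)=d\), at least one of \(P,Q\) has degree exactly \(d\). By the symmetry of the construction (swapping the roles of \(u\) and \(v\)), assume \(\deg P=d\). We will show that the first component \(p'(v)\,P(p(u),p(v))\) has degree exactly \(md+m-1\).

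Write \(p(x)=a x^m+(\text{lower terms})\) with \(a\neq 0\). Decompose \(P=\sum_{j=0}^{d}P_j\) into homogeneous parts, with \(P_d\neq 0\). Each \(P_j(p(u),p(v))\) with \(j<d\) has total degree at most \(mj<md\). The degree-\(md\) homogeneous part of \(P_d(p(u),p(v))\) is \(P_d(a u^m, a v^m)\), and every other term has lower degree. So the degree-\(md\) part of \(P(p(u),p(v))\) equals \(P_d(au^m,av^m)=a^d P_d(u^m,v^m)\).

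The key point, and the only place a cancellation could occur, is that \(P_d(u^m,v^m)\neq 0\). This holds because the substitution \(x^iy^j\mapsto u^{mi}v^{mj}\) sends distinct monomials to distinct monomials. Hence the nonzero coefficients of \(P_d\) survive, and \(\deg P(p(u),p(v))=md\) exactly.

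Finally, \(p'(v)\) has top homogeneous part \(m a v^{m-1}\neq 0\). The top-degree part of a product of polynomials is the product of their top-degree parts, and \(\mathbb{R}[u,v]\) is an integral domain, so that product is nonzero. Therefore
\[
\deg\bigl(p'(v)P(p(u),p(v))\bigr)=md+m-1 .
\]
Combined with the upper bound, this gives \(\deg(Y)=md+(m-1)\), which is equivalent to \(\deg(Y)+1=m(\deg(X)+1)\).
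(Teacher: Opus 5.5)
Your proposal is correct and follows essentially the same route as the paper: pick a component whose degree-\(d\) homogeneous part is nonzero, show its composed top part \(P_d(au^m,av^m)\) has degree exactly \(md\), and observe that multiplying by the leading term \(m a v^{m-1}\) of \(p'(v)\) cannot cancel. Your monomial-injectivity argument for \(P_d(u^m,v^m)\neq 0\) and the integral-domain remark simply make explicit two steps that the paper asserts without justification.
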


\begin{proof}
Write \(p(u)=cu^m+\text{(lower degree)}\) with \(c\neq 0\).
Let \(P_d,Q_d\) be the homogeneous parts of total degree \(d\). 
At least one of \(P_d,Q_d\) is nonzero.

If \(P_d\not\equiv 0\), then \(P(p(u),p(v))\) has top-degree part \(P_d(cu^m,cv^m)\), a nonzero homogeneous polynomial of total degree \(md\). 
Multiplying by \(p'(v)=mc\,v^{m-1}+\text{(lower)}\) produces a nonzero contribution of total degree \(md+(m-1)\) in \(\dot u\), and
no lower-degree terms can cancel it.
Hence \(\deg(\dot u)=md+(m-1)\).

If instead \(P_d\equiv 0\) and \(Q_d\not\equiv 0\), the same argument applies to \(\dot v=p'(u)Q(p(u),p(v))\), giving \(\deg(\dot v)=md+(m-1)\).

Therefore \(\deg(Y)=\max\{\deg(\dot u),\deg(\dot v)\}=md+(m-1)\).
\end{proof}

\subsection{Quadratic ceiling for \emph{pure} separable replication}

\begin{proof}[Proof of Theorem~\ref{thm:quadraticceiling}.]
At step \(j\), the number of carried-over limit cycles multiplies by at most \(B(p_j)^2\le m_j^2\) (Lemma \ref{lem:branchbound}).
Thus
\[
\pi(X_r)\ \le\ k_0\prod_{j=1}^r m_j^2.
\]
By Lemma \ref{lem:exactdegree}, each pullback step satisfies
\[
\deg(X_j)+1 = m_j(\deg(X_{j-1})+1).
\]
Therefore
\begin{align*}
N+1=\deg(X_r)+1 &= (\deg(X_0)+1)\prod_{j=1}^r m_j,\\
\text{so}\qquad
\prod_{j=1}^r m_j &= \frac{N+1}{\deg(X_0)+1}.
\end{align*}
Substitute into the cycle bound to get
\[
\pi(X_r)\ \le\ k_0\left(\frac{N+1}{\deg(X_0)+1}\right)^2
\ =\ k_0\left(\frac{N+1}{n_0+1}\right)^2,
\]
as claimed.
\end{proof}

\begin{remark}[Why superquadratic lower bounds need more than replication]
Theorem \ref{thm:quadraticceiling} isolates the obstruction:
\emph{replication-only} via separable pullbacks cannot yield superquadratic growth in degree.
Thus any \(O(n^2\log n)\)-type lower bound must incorporate additional cycle-creation mechanisms beyond mere lifting 
(see, e.g., 
\cite{HanLi2012} 
and the recent summary discussions around Hilbert-number growth such as
\cite{GasullSantanaCPAA2026,BuzziNovaes2024}).
\end{remark}

\begin{remark}[Open direction: non-separable polynomial coverings]\label{rem:nonseparable}
The quadratic ceiling in Theorem~\ref{thm:quadraticceiling} is proved for \emph{separable} pullbacks
\(\Phi(u,v)=(p(u),p(v))\), where the replication factor is controlled by the one-dimensional full-branch count \(B(p)\).
It is natural to ask what changes if one allows \emph{non-separable} polynomial maps
\[
\Phi(u,v)=(p(u,v),\,q(u,v)).
\]
Whenever \(\det D\Phi(u,v)\neq 0\), one can define a polynomial ``pullback'' vector field by
\[
Y(u,v):=\operatorname{adj}(D\Phi(u,v))\,X(\Phi(u,v)),
\]
i.e.
\[
\dot u = q_v\,P(\Phi)-p_v\,Q(\Phi),\qquad
\dot v = -q_u\,P(\Phi)+p_u\,Q(\Phi),
\]
so that
\[
D\Phi\cdot Y = (\det D\Phi)\,X\circ\Phi.
\]
Thus, on any region where \(\Phi\) restricts to a diffeomorphism and \(\det D\Phi\) has constant sign, 
the orbit sets
(and local Poincar\'e return maps, up to inversion) 
correspond exactly as in the separable case.

The open problem is then to quantify replication in terms of the \emph{two-dimensional branch geometry} of \(\Phi\):
how many pairwise disjoint domains can \(\mathbb{R}^2\) be partitioned into on which \(\Phi\) is a diffeomorphism onto a
fixed target region such as \((-1,1)^2\), and 
how does this number depend on \(\deg(p)\) and \(\deg(q)\)?
In particular,
it would be interesting to determine whether an analogue of Theorem~\ref{thm:quadraticceiling} holds for
replication-only schemes built from such non-separable coverings,
or whether genuinely new growth rates become possible.
\end{remark}

\section[A worked example with m=3]{A worked example: \(m=3\) yields nine limit cycles from one}\label{sec:workedexample}
This section illustrates Theorem~\ref{thm:replication} on an explicit polynomial vector field with a single hyperbolic limit cycle.
We then apply the Chebyshev pullback with \(m=3\) to obtain \(3^2=9\) disjoint hyperbolic
limit cycles in the \(3\times 3\) branch rectangles.

\begin{figure}[htbp]
\centering
\begin{minipage}[t]{0.49\linewidth}
  \centering
  \includegraphics[width=\linewidth]{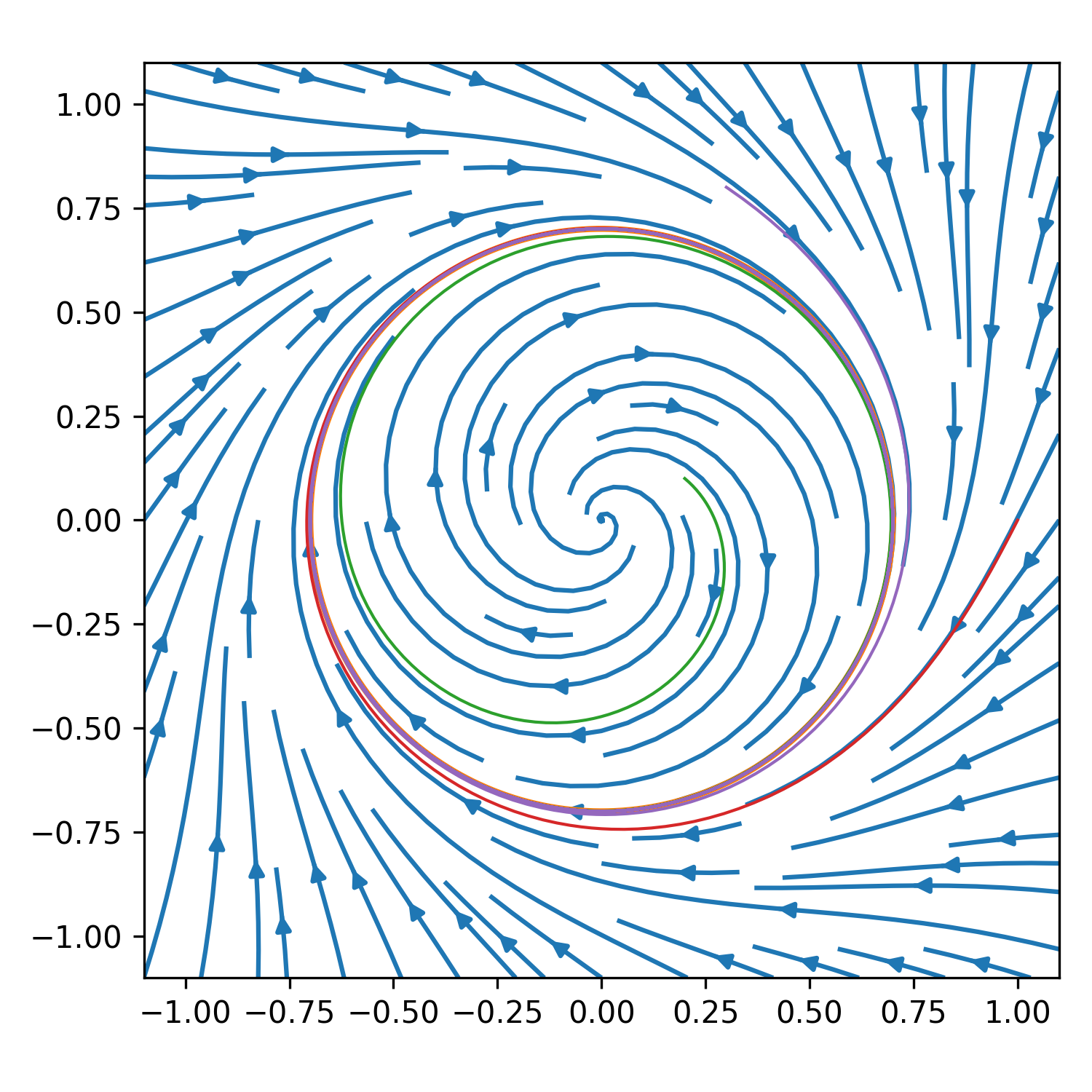}
  \smallskip

  \small\textbf{(a)} Original system \eqref{eq:radial-cubic-rho}:
  the hyperbolic limit cycle \(\gamma_\rho=\{x^2+y^2=\rho^2\}\).
\end{minipage}\hfill
\begin{minipage}[t]{0.49\linewidth}
  \centering
  \includegraphics[width=\linewidth]{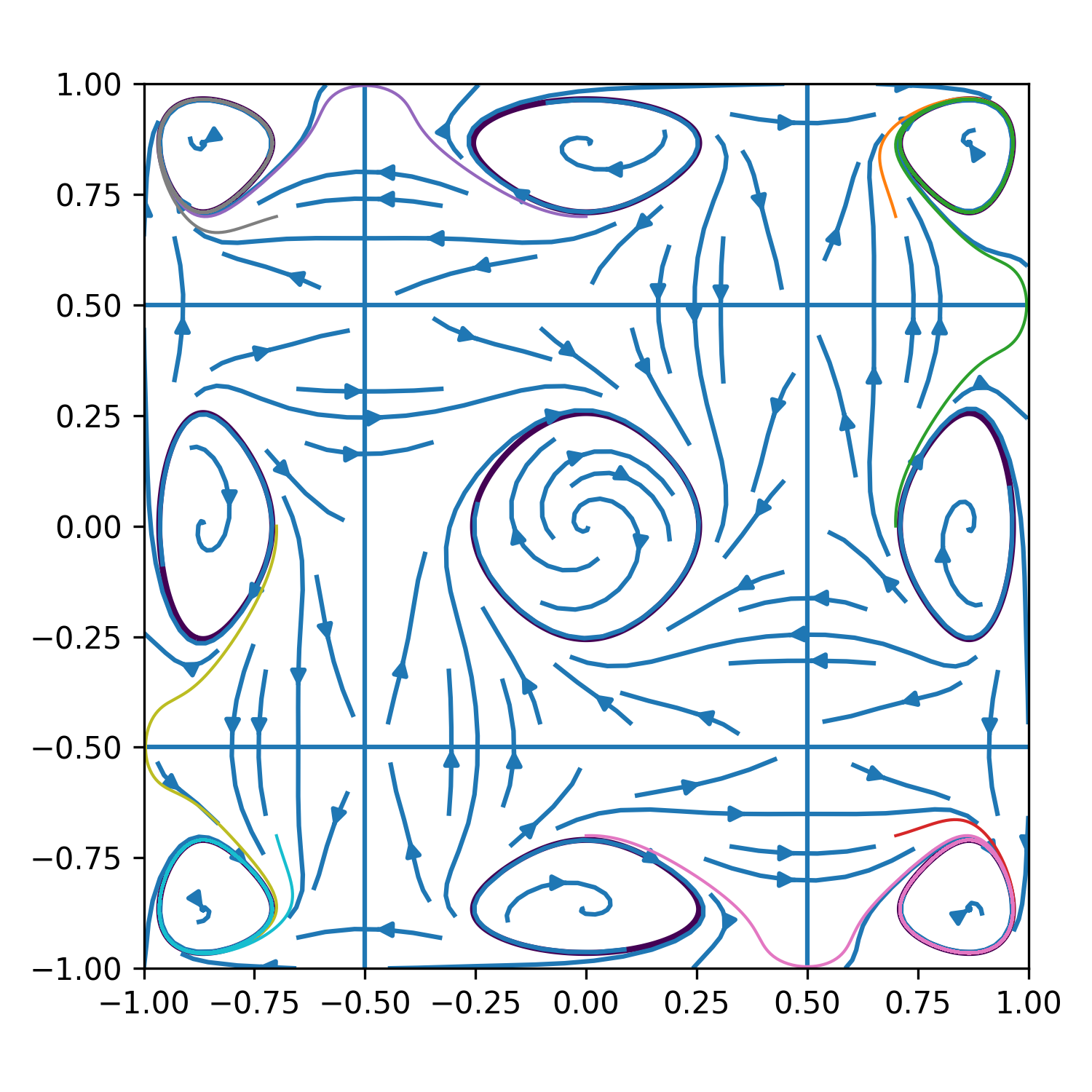}
  \smallskip

  \small\textbf{(b)} Pullback with \(m=3\):
  the lifted cycles \(\Phi^{-1}(\gamma_\rho)\) split into \(9\) components,
  one in each branch rectangle \(I_i\times I_j\).
\end{minipage}
\caption{Visual illustration of the worked example in Section~\ref{sec:workedexample}.}
\label{fig:workedexample-plots}
\end{figure}

\medskip
\noindent\textbf{Visual illustration.}
Although the replication argument is entirely analytic, it is helpful to visualize the geometry.
Figure~\ref{fig:workedexample-plots}(a) shows the phase portrait of \eqref{eq:radial-cubic-rho} and the limit cycle
\(\gamma_\rho=\{x^2+y^2=\rho^2\}\).
Figure~\ref{fig:workedexample-plots}(b) depicts the pullback geometry for \(m=3\): the lifted set is given exactly by the
implicit equation
\[
T_3(u)^2+T_3(v)^2=\rho^2,
\]
which decomposes into nine disjoint closed components,
one contained in each branch rectangle \(I_i\times I_j\).
The branch boundaries correspond to the critical lines \(u=\pm\frac12\) and \(v=\pm\frac12\), where \(T_3'=0\).

\subsection{A cubic system with one hyperbolic limit cycle}

Fix \(\rho\in(0,1)\) and let \(X_\rho=(P_\rho,Q_\rho)\) be the cubic polynomial vector field defined by
\begin{equation}\label{eq:radial-cubic-rho}
\dot x = P_\rho(x,y)= y - x\bigl(x^2+y^2-\rho^2\bigr),\qquad
\dot y = Q_\rho(x,y)= -x - y\bigl(x^2+y^2-\rho^2\bigr).
\end{equation}
Write \(x=r\cos\theta\), \(y=r\sin\theta\). Since
\[
(\dot x,\dot y)=(y,-x) - (r^2-\rho^2)(x,y),
\]
the first term is a rigid rotation and the second term is purely radial.
A direct computation gives
\begin{align*}
\dot r &= \frac{x\dot x+y\dot y}{r} \\
       &= \frac{x\bigl(y-x(r^2-\rho^2)\bigr)+y\bigl(-x-y(r^2-\rho^2)\bigr)}{r} \\
       &= -r(r^2-\rho^2)=r(\rho^2-r^2).
\end{align*}
and
\[
\dot\theta=\frac{x\dot y-y\dot x}{r^2}=-1.
\]
Hence \(r=\rho\) is a periodic orbit. 
It is \emph{hyperbolic} because for \(f(r):=r(\rho^2-r^2)\) one has
\(f'(\rho)=\rho^2-3\rho^2=-2\rho^2\neq 0\).
Moreover, 
\(\dot r>0\) for \(0<r<\rho\) and \(\dot r<0\) for \(r>\rho\), so the periodic orbit is isolated and therefore a
(limit) cycle. 
Denote it by
\[
\gamma_\rho:=\{(x,y):x^2+y^2=\rho^2\}\subset (-\rho,\rho)^2\Subset(-1,1)^2.
\]
Thus this explicit example shows \(H(3)\ge 1\).

\subsection[Chebyshev pullback with m=3]{Chebyshev pullback with \(m=3\)}

Recall
\[
T_3(t)=4t^3-3t,\qquad T_3'(t)=12t^2-3.
\]
The critical points of \(T_3\) in \((-1,1)\) are \(\pm\frac12\), so the three full-branch intervals can be taken as
\[
I_1=\Bigl(\frac12,1\Bigr),\qquad
I_2=\Bigl(-\frac12,\frac12\Bigr),\qquad
I_3=\Bigl(-1,-\frac12\Bigr),
\]
on each of which \(T_3:I_k\to(-1,1)\) is a diffeomorphism (and \(T_3'\neq 0\) on \(I_k\)).
Define the separable map
\[
\Phi(u,v)=(T_3(u),T_3(v)).
\]
For each \((i,j)\in\{1,2,3\}^2\), the restriction \(\Phi:I_i\times I_j\to(-1,1)^2\) is a diffeomorphism.

Define the pullback vector field \(Y\) by the Chebyshev construction (cf.\ Theorem~\ref{thm:replication}):
\begin{align}\label{eq:pullback-Y-m3}
\dot u
&= T_3'(v)\,P_\rho(T_3(u),T_3(v)) \\
&= T_3'(v)\Bigl(T_3(v)
   - T_3(u)\bigl(T_3(u)^2+T_3(v)^2-\rho^2\bigr)\Bigr),\nonumber\\
\dot v
&= T_3'(u)\,Q_\rho(T_3(u),T_3(v)) \\
&= T_3'(u)\Bigl(-T_3(u)
   - T_3(v)\bigl(T_3(u)^2+T_3(v)^2-\rho^2\bigr)\Bigr).\nonumber
\end{align}
Since \(\deg(X_\rho)=3\), \(\deg(T_3)=3\), and \(\deg(T_3')=2\), the degree bookkeeping gives
\[
\deg(Y)=3\cdot 3+(3-1)=11.
\]

\subsection{Nine disjoint lifted hyperbolic limit cycles}

Pointwise on each branch rectangle \(I_i\times I_j\), the chain rule yields
\[
D\Phi\cdot Y = \lambda\,X_\rho\circ\Phi,\qquad
\lambda(u,v)=T_3'(u)T_3'(v)\neq 0 \ \text{on } I_i\times I_j,
\]
so \(\Phi\) maps \(Y\)-trajectories in \(I_i\times I_j\) onto \(X_\rho\)-trajectories in \((-1,1)^2\) up to a
nonvanishing time change (Lemma~\ref{lem:timechange}). 
Consequently, local Poincar\'e return maps are conjugate
(up to inversion when \(\lambda<0\)).

For each \((i,j)\), define
\[
\widetilde\gamma_{ij}:=\Phi^{-1}(\gamma_\rho)\cap(I_i\times I_j).
\]
Because \(\Phi|_{I_i\times I_j}\) is a diffeomorphism, \(\widetilde\gamma_{ij}\) is a simple closed curve and
\(\Phi(\widetilde\gamma_{ij})=\gamma_\rho\). Since \(\gamma_\rho\Subset(-1,1)^2\), each lift
\(\widetilde\gamma_{ij}\) is compactly contained in \(I_i\times I_j\), 
hence stays a positive distance from the rectangle boundary.
The orbit correspondence implies \(\widetilde\gamma_{ij}\) is a periodic orbit of \(Y\).

Furthermore, \(\gamma_\rho\) is hyperbolic, 
so the corresponding return map has multiplier \(\mu\neq 1\).
Under conjugacy the multiplier is preserved, and 
under inversion it becomes \(\mu^{-1}\neq 1\); 
hence each \(\widetilde\gamma_{ij}\) is a hyperbolic limit cycle of \(Y\).
The rectangles \(I_i\times I_j\) are pairwise disjoint, so the nine cycles \(\widetilde\gamma_{ij}\) are pairwise disjoint.

 Therefore \(Y\) is a polynomial vector field of degree \(11\) with at least \(9\) limit cycles, and hence
\[
H(11)\ge 9.
\]

\section{Conclusion}\label{sec:conclusion}

We have revisited one of the basic amplification mechanisms in the study of Hilbert numbers: replication of limit cycles through polynomial pullbacks.
Our first main result is a clean Chebyshev-based replication theorem.
For every \(m\ge 2\), the separable map
\[
\Phi(u,v)=(T_m(u),T_m(v))
\]
has \(m\) full monotone branches in each coordinate, hence \(m^2\) branch rectangles on which \(\Phi\) is a diffeomorphism onto \((-1,1)^2\).
Pulling back a degree-\(\le n\) polynomial vector field through this map and using invariance under nonvanishing time reparametrization yields
\[
H(nm+m-1)\ge m^2H(n).
\]
This gives a fully explicit and self-contained replication step with exact degree growth
\[
\deg(Y)=m\,\deg(X)+(m-1).
\]

Our second main result identifies a structural limitation of \emph{pure} separable replication.
After abstracting the construction to general pullbacks of the form \((u,v)\mapsto (p(u),p(v))\), we proved that the number of full branches of a degree-\(m\) polynomial \(p\) is at most \(m\), with equality attained by the Chebyshev polynomial.
As a consequence, if one iterates separable pullbacks and assumes that no new limit cycles are created beyond the lifted copies forced by branch geometry, then the total number of limit cycles in the resulting degree-\(N\) system is bounded by
\[
\pi(X)\le k_0\left(\frac{N+1}{n_0+1}\right)^2.
\]
Thus replication-only schemes of this type can produce at most quadratic growth in the degree.
In particular, the stronger \(N^2\log N\)-type lower bounds in the literature must rely on additional cycle-creation mechanisms beyond separable lifting; see, for example, \cite{ChristopherLloyd1995,LiChanChung2002,HanLi2012}.

A further point of the paper is that the Chebyshev replication theorem is not only structurally sharp within the separable class, but also numerically competitive when combined with strong seed bounds from the literature.
Using the currently best published values recorded by Han--Li and Prohens--Torregrosa, our one-step replication bound matches several published records and improves others.
In particular, the comparison carried out in Section~\ref{sec:introduction} and Appendix~\ref{app:derivations} yields the explicit estimates
\[
H(14)\ge 252,\qquad
H(29)\ge 1080,\qquad
H(31)\ge 1380,\qquad
H(39)\ge 2012,
\]
which improve the previously published bounds in those degrees.
This shows that even though pure replication cannot explain the known superquadratic asymptotic growth, it remains a useful degree-amplification tool at the concrete numerical level.

Several directions remain open.
The most natural one is to move beyond separable coverings and investigate polynomial pullbacks of the form
\[
\Phi(u,v)=(p(u,v),q(u,v)),
\]
where the relevant branch geometry is genuinely two-dimensional.
It would be very interesting to understand whether analogues of our quadratic ceiling persist in that broader setting, or whether non-separable coverings allow genuinely different replication rates.
More generally, it remains an important problem to determine how replication mechanisms can be combined with local bifurcation mechanisms in a systematic way so as to obtain stronger explicit lower bounds for \(H(n)\) while keeping degree growth under control.

\appendix
\section{Derivation of the lower bounds used in Table~\ref{tab:pub-vs-cheb}}
\label{app:derivations}

In this appendix we record the seed bounds taken from the literature and explain how
the numbers in Table~\ref{tab:pub-vs-cheb} are obtained.

Let \(L_{\mathrm{pub}}(n)\) denote the best currently published lower bound used as a seed.
For the degrees needed in Table~\ref{tab:pub-vs-cheb}, we take:
\begin{align*}
L_{\mathrm{pub}}(4) &= 28, & 
L_{\mathrm{pub}}(5) &= 37, & 
L_{\mathrm{pub}}(6) &= 53, \\
L_{\mathrm{pub}}(7) &= 74, & 
L_{\mathrm{pub}}(8) &= 96, & 
L_{\mathrm{pub}}(9) &= 120, \\
L_{\mathrm{pub}}(10) &= 142,
\end{align*}
from Prohens--Torregrosa \cite[Thm.~1]{ProhensTorregrosa2019},
together with
\begin{align*}
L_{\mathrm{pub}}(11) &= 153, & 
L_{\mathrm{pub}}(12) &= 157, & 
L_{\mathrm{pub}}(14) &= 194, \\
L_{\mathrm{pub}}(15) &= 345, & 
L_{\mathrm{pub}}(16) &= 351, & 
L_{\mathrm{pub}}(18) &= 372, \\
L_{\mathrm{pub}}(19) &= 503, & 
L_{\mathrm{pub}}(20) &= 509,
\end{align*}
from Han--Li \cite[Thm.~1.2(i)]{HanLi2012}.
For the degrees where Prohens--Torregrosa improve Han--Li, we use
\begin{align*}
L_{\mathrm{pub}}(13) &= 212, & 
L_{\mathrm{pub}}(17) &= 384, & 
L_{\mathrm{pub}}(21) &= 568, \\
L_{\mathrm{pub}}(31) &= 1184, & 
L_{\mathrm{pub}}(35) &= 1536, & 
L_{\mathrm{pub}}(39) &= 1920, \\
L_{\mathrm{pub}}(43) &= 2272.
\end{align*}
from \cite[Cor.~2(a)]{ProhensTorregrosa2019}.

For a target degree \(N\), Theorem~\ref{thm:replication} gives
\[
H((n+1)m-1)\ge m^{2}H(n).
\]
Hence, whenever \(N+1=(n+1)m\) with \(m\ge2\), we obtain
\[
H(N)\ge m^{2}L_{\mathrm{pub}}(n).
\]
The value \(L_{\mathrm{Ch}}(N)\) in Table~\ref{tab:pub-vs-cheb} is simply the maximum of
these direct one-step consequences over all admissible factorizations \(N+1=(n+1)m\).

\begin{table}[htbp]
\centering
\small
\setlength{\tabcolsep}{5pt}
\resizebox{\textwidth}{!}{%
\begin{tabular}{ccccc}
\toprule
\(N\) & factorization of \(N+1\) & seed bound used & theorem output & value of \(L_{\mathrm{Ch}}(N)\)\\
\midrule
11 & \(12=6\cdot 2\)   & \(H(5)\ge 37\)   & \(H(11)\ge 4\cdot 37\)   & \(148\)  \\
13 & \(14=7\cdot 2\)   & \(H(6)\ge 53\)   & \(H(13)\ge 4\cdot 53\)   & \(212\)  \\
14 & \(15=5\cdot 3\)   & \(H(4)\ge 28\)   & \(H(14)\ge 9\cdot 28\)   & \(252\)  \\
15 & \(16=8\cdot 2\)   & \(H(7)\ge 74\)   & \(H(15)\ge 4\cdot 74\)   & \(296\)  \\
17 & \(18=9\cdot 2\)   & \(H(8)\ge 96\)   & \(H(17)\ge 4\cdot 96\)   & \(384\)  \\
19 & \(20=10\cdot 2\)  & \(H(9)\ge 120\)  & \(H(19)\ge 4\cdot 120\)  & \(480\)  \\
20 & \(21=7\cdot 3\)   & \(H(6)\ge 53\)   & \(H(20)\ge 9\cdot 53\)   & \(477\)  \\
21 & \(22=11\cdot 2\)  & \(H(10)\ge 142\) & \(H(21)\ge 4\cdot 142\)  & \(568\)  \\
23 & \(24=8\cdot 3\)   & \(H(7)\ge 74\)   & \(H(23)\ge 9\cdot 74\)   & \(666\)  \\
24 & \(25=5\cdot 5\)   & \(H(4)\ge 28\)   & \(H(24)\ge 25\cdot 28\)  & \(700\)  \\
25 & \(26=13\cdot 2\)  & \(H(12)\ge 157\) & \(H(25)\ge 4\cdot 157\)  & \(628\)  \\
26 & \(27=9\cdot 3\)   & \(H(8)\ge 96\)   & \(H(26)\ge 9\cdot 96\)   & \(864\)  \\
27 & \(28=14\cdot 2\)  & \(H(13)\ge 212\) & \(H(27)\ge 4\cdot 212\)  & \(848\)  \\
29 & \(30=10\cdot 3\)  & \(H(9)\ge 120\)  & \(H(29)\ge 9\cdot 120\)  & \(1080\) \\
31 & \(32=16\cdot 2\)  & \(H(15)\ge 345\) & \(H(31)\ge 4\cdot 345\)  & \(1380\) \\
35 & \(36=18\cdot 2\)  & \(H(17)\ge 384\) & \(H(35)\ge 4\cdot 384\)  & \(1536\) \\
39 & \(40=20\cdot 2\)  & \(H(19)\ge 503\) & \(H(39)\ge 4\cdot 503\)  & \(2012\) \\
43 & \(44=22\cdot 2\)  & \(H(21)\ge 568\) & \(H(43)\ge 4\cdot 568\)  & \(2272\) \\
\bottomrule
\end{tabular}%
}
\caption{Direct derivation of the values \(L_{\mathrm{Ch}}(N)\) from
Theorem~\ref{thm:replication} and the seed bounds available in the literature.}
\label{tab:cheb-derivation}
\end{table}

For example,
\[
30=(9+1)\cdot 3,
\]
so Theorem~\ref{thm:replication} with \(n=9\) and \(m=3\) yields
\[
H(29)\ge 9H(9)\ge 9\cdot 120=1080.
\]
Likewise,
\[
32=(15+1)\cdot 2
\]
gives
\[
H(31)\ge 4H(15)\ge 4\cdot 345=1380,
\]
and
\[
40=(19+1)\cdot 2
\]
gives
\[
H(39)\ge 4H(19)\ge 4\cdot 503=2012.
\]
All other entries are obtained in exactly the same way.

\end{document}